\titlespacing*{\section}{0pt}{14pt}{4pt}
\titlespacing*{\subsection}{0pt}{8pt}{3pt}
\def\maketimestamp{\count255=\time
\divide\count255 by 60\relax
\edef\thetime{\the\count255:}%
\multiply\count255 by-60\relax
\advance\count255 by\time
\edef\thetime{\thetime\ifnum\count255<10 0\fi\the\count255}
\edef\thedate{\number\day-\ifcase\month\or Jan\or Feb\or Mar\or
             Apr\or May\or Jun\or Jul\or Aug\or Sep\or Oct\or
             Nov\or Dec\fi-\number\year}
\def\timstamp{\hbox to\hsize{\tt\hfil\thedate\hfil\thetime\hfil}}}
\numberwithin{equation}{section}  
\newtheorem{theorem}{Theorem}[section]
\newtheorem{lemma}[theorem]{Lemma}
\newtheorem{proposition}[theorem]{Proposition}
\newtheorem{corollary}[theorem]{Corollary}
\theoremstyle{definition}
\newtheorem{definition}{Definition}[section]
\newtheorem{example}{Example}
\theoremstyle{remark}
\newtheorem{remark}{Remark}
\newtheoremstyle{question}%
{3pt}%
{3pt}%
{}%
{}%
{\hspace{-2em}$\longrightarrow$ \scshape}%
{:}%
{.5em}%
{}%
\theoremstyle{question}
\newcommand{\ip}[2]{\langle#1,#2\rangle}
\DeclareMathOperator{\Span}{span} %
\DeclareMathOperator{\supp}{supp} %
\DeclareMathOperator{\tr}{tr} 
\DeclareMathOperator{\sparky}{spark}
\newcommand{\spark}[2][]{\sparky_{#1}{(#2)}}
\newcommand{\id}[1][]{I_{#1}} 
\newcommand{\card}[1]{\abs{#1}} 
\newcommand*{\numbersys}[1]{\ensuremath{\mathbb{#1}}}
\newcommand*{\C}{\numbersys{C}}
\newcommand*{\R}{\numbersys{R}}
\newcommand*{\N}{\numbersys{N}}
\newcommand*{\scalarfield}[1]{\ensuremath{\mathbb{#1}}}
\newcommand*{\K}{\scalarfield{K}}
\newcommand{\itvoo}[2]{\ensuremath{\left({#1},{#2}\right)}} %
\newcommand{\itvco}[2]{\ensuremath{\left[{#1},{#2}\right)}} %
\newcommand{\abs}[1]{\ensuremath{\left\lvert#1\right\rvert}}
\newcommand{\norm}[2][]{\ensuremath{\left\lVert#2\right\rVert_{#1}}}
\newcommand{\normsmall}[2][]{\ensuremath{\lVert#2\rVert_{#1}}}
\newcommand{\innerprod}[3][]{\ensuremath{\left\langle #2,#3\right\rangle_{\! #1}}}
\newcommand{\innerprods}[2]{\ensuremath{\langle #1,#2\rangle}}
\newcommand{\set}[1]{\ensuremath{\left\lbrace{#1}\right\rbrace}}
\newcommand{\seq}[1]{\ensuremath{\left({#1}\right)}}
\newcommand{\seqsmall}[1]{\ensuremath{({#1})}}
\newcommand{\setprop}[2]{\ensuremath{\left\lbrace{#1} : {#2}\right\rbrace}}
\newcommand{\setsmall}[1]{\ensuremath{\lbrace{#1}\rbrace}}
\newcommand{\setpropsmall}[2]{\ensuremath{\lbrace{#1} : {#2}\rbrace}}
\newcommand{\floor}[1]{\left\lfloor #1 \right\rfloor}
\newcommand{\STF}{\ensuremath{\mathbf{STF}}}
\renewcommand*\env@matrix[1][c]{\hskip -\arraycolsep
  \let\@ifnextchar\new@ifnextchar
  \array{*\c@MaxMatrixCols #1}}
\newlength{\bracewidth}
\newcommand{\myunderbrace}[2]{\settowidth{\bracewidth}{$#1$}#1\hspace*{-1\bracewidth}\smash{\underbrace{\makebox{\phantom{$#1$}}}_{#2}}}
\newcommand{\ie}{i.e.,\xspace} 
\newcommand{\eg}{e.g.,\xspace} 
\def\blfootnote{\xdef\@thefnmark{}\@footnotetext} 
\def\subjclass{\xdef\@thefnmark{}\@footnotetext}
\long\def\symbolfootnote[#1]#2{\begingroup%
\def\thefootnote{\fnsymbol{footnote}}\footnote[#1]{#2}\endgroup} 
  \renewenvironment{abstract}{%
      \titlepage
      \null\vfil
      \@beginparpenalty\@lowpenalty
      \begin{center}%
        \bfseries \abstractname
        \@endparpenalty\@M
      \end{center}}%
     {\par\vfil\null\endtitlepage}
  \renewenvironment{abstract}{%
      \if@twocolumn
        \section*{\abstractname}%
      \else
        \small
        \list{}{%
          \settowidth{\labelwidth}{\textbf{\abstractname:}}
          \setlength{\leftmargin}{50pt}
          \setlength{\rightmargin}{50pt}
          \setlength{\itemindent}{\labelwidth}
          \addtolength{\itemindent}{\labelsep}
        }
        \item[\textbf{\abstractname:}]

      \fi}
      {\if@twocolumn\else\endlist\fi}
\begin{document}


\title{Sparsity and spectral properties of dual frames} 





\author{Felix Krahmer\footnote{Georg-August-Universit\"at G\"ottingen, Institut f\"ur Numerische und Angewandte Mathematik, 37083 G\"ottingen, Germany, E-mail \url{f.krahmer@math.uni-goettingen.de}}\phantom{$\ast$}, 
Gitta Kutyniok\footnote{%
Technische Universit\"at Berlin, Institut f\"ur Mathematik,  10623 Berlin, Germany, E-mail: \url{kutyniok@math.tu-berlin.de}}\phantom{$\dagger$},
Jakob Lemvig\footnote{Technical University of Denmark, Department of Mathematics, Matematiktorvet 303, 2800 Kgs. Lyngby, Denmark,
    E-mail: \url{J.Lemvig@mat.dtu.dk}}}

\date{\today}

\maketitle 
\blfootnote{$2010$ {\it Mathematics Subject Classification.} Primary 42C15; Secondary 15A09, 65F15, 65F20, 65F50}
\blfootnote{{\it Key words and phrases.}  dual frames, frame theory, singular values, sparse duals, sparsity,  spectrum, tight frames.}

\thispagestyle{plain}
\begin{abstract} We study sparsity and spectral properties of dual frames of a given finite frame. We show that any finite frame has a dual with no more than $n^2$ non-vanishing entries, where $n$ denotes the ambient dimension, and that for most frames no sparser dual is possible. Moreover, we derive an expression for the exact sparsity level of the sparsest dual for any given finite frame using a generalized notion of spark. We then study the spectral properties of dual frames in terms of singular values of the synthesis operator. We provide a complete characterization for which spectral patterns of dual frames are possible for a fixed frame. For many cases, we provide simple explicit constructions for dual frames with a given spectrum, in particular, if the constraint on the dual is that it be tight.   
\end{abstract}



\section{Introduction}
\label{sec:introduction}
Redundant representations of finite dimensional data using finite frames have received considerable attention over the last years (see \cite{CaKu12} for an extensive treatment including many references concerning various aspects). Finite frames are overcomplete systems in a finite dimensional Hilbert space. Similar to a basis expansion, frames can be used both to represent a signal in terms of its inner products with the frame vectors, the \emph{frame coefficients}, and to reconstruct the signal from the frame coefficients. As for bases, the frames used for representation and reconstruction depend on each other, but are not, in general, the same. In contrast to basis expansions, however, even once one of the two frames is fixed, the other, the \emph{ dual frame}, is not unique. Hence the quality of a method for redundant representation will depend on both the frame and the dual frame. There is, however, a standard choice for the dual frame, the so-called \emph{canonical dual}, corresponding to the Moore-Penrose pseudo-inverse, whose optimality in many respects follows directly from its geometric interpretation. This is why most previous work concerning the design of finite frames has focused primarily on the design of single frames rather than dual frame pairs, as it would be necessary to take into account all degrees of freedom in the problem. In particular, design according to spectral properties \cite{CFMPS11} and sparsity \cite{optimally2011} has recently received attention. 

In this paper, we will take a different viewpoint; we study the set of dual frames for a given fixed finite frame. The guiding criteria, however, will again be sparsity and spectral properties. This is motivated by work in the recent papers \cite{krahmer_Sparse_duals, shidong_l1-min-duals}, which ask the following question:
 If the
frame $\{\phi_i\}_{i=1}^m$ is given by the application at hand, \eg by
the way of measuring the data, which dual for
the synthesis process is the best to choose? The precise answer to
this question is, of course, dependent on the application, but
universal desirable properties of the dual can, nonetheless, be
recognized. Among such desirable properties are fast and stable
reconstruction. Measures for the \emph{stability} of the reconstructions resulting from a dual frame are extensively discussed in \cite{krahmer_Sparse_duals}. It is shown that the computational properties of the dual frames are directly linked to spectral properties of the dual frame matrix. In particular, the Frobenius norm and the spectral norm play an important role in this context. The analysis in \cite{krahmer_Sparse_duals} focuses on stability estimates when the frame coefficient vector is corrupted by random or uniform noise. The papers \cite{Erasure1, Erasure2, ErasureP} discuss an alternative scenario of corruptions through erasures. The fact that the canonical dual is shown to be optimal in many cases also suggests connections to spectral properties, but we will not explore this connection further in this paper, leaving it to potential further work.

A \emph{fast} reconstruction process is closely linked to fast
matrix-vector multiplication properties of the dual frame matrix. Matrices with
these properties
can be constructed in various ways. However, many such constructions yield matrices with a 
specific structure, often linked to the fast Fourier transform, which can be counteracting
the constraint that the matrix should be a dual to the given frame.
The criterion of sparsity,  while also connected to fast matrix-vector multiplication properties, 
does not lead to such problems, but rather provides a way to ensure both duality to a given frame and a fast matrix-vector multiply. The first paper on the topic \cite{shidong_l1-min-duals} constructs sparse duals of
Gabor frames using $\ell_1$-minimization, and 
\cite{krahmer_Sparse_duals} also uses sparsity as a measure for computational efficiency.

The focus of \cite{krahmer_Sparse_duals} has been on deriving criteria for comparing computational properties of dual frames and empirically balancing them to find a good dual frame. This paper is meant to be complementary to \cite{krahmer_Sparse_duals} as we analytically study the set of all dual frames of a given frame with respect to sparsity and spectral properties. We ask how sparse a dual frame can be and what we can say about the spectrum. 

Our paper is organized as follows. In Section~\ref{sec:setup} we present more formal definitions of frames and their duals as well as a review of results in frame theory, an overview of the notation we will be using, and some basic observations.
 In Section~\ref{sec:sparsity-duals} we then consider the sparsity properties of the set of all dual frames.  Finally, Section~\ref{sec:spectrum-dual} focuses on the spectral picture of the set of all dual frames.

\section{Setup and basic observations\label{sec:setup}}
\subsection{Frames and their duals}
Throughout this paper, we are considering signals $x$ in a $n$-dimensional Hilbert space $\mathcal{H}_n$. We will often identify
$\mathcal{H}_n$ with $\K^n$, where $\K$ denotes either $\R$ or $\C$, using a fixed orthonormal basis of
$\mathcal{H}_n$. We write this canonical identification as $\mathcal{H}_n \cong \K^n$. Thus, $\K^n$ or $\mathcal{H}_n$
will be the signal space and $\K^m$ the coefficient space. Then the precise definitions of frames and their dual frames are the following.

\begin{definition}\label{def:framedef}
  A collection of vectors $\Phi=\seq{\phi_i}_{i=1}^m\subset\mathcal{H}_n$ is called a \emph{frame} for $\mathcal{H}_n$ if there are two
  constants $0<A\leq B$ such that
\begin{equation}\label{eq:frameineq}
A\norm[2]{x}^2 \leq \sum_{i=1}^m \abs{\innerprod{x}{\phi_i}}^2 \leq B\norm[2]{x}^2,\quad\text{for all $x\in\mathcal{H}_n$.}
\end{equation}
If the frame bounds $A$ and $B$ are equal, the frame $\seq{\phi_i}_{i=1}^m$ is called \emph{tight frame} for $\mathcal{H}_n$. 
\end{definition}

Let $\Phi=\seq{\phi_i}_{i=1}^m$ be a frame for $\mathcal{H}_n$.  Since we identify $\mathcal{H}_n \cong \K^n$, we also
write $\phi_i$ for the coefficient (column) vector of $\phi_i$ and $\Phi=[\phi_i]_{i=1}^m \in \K^{n \times m}$ as a $n
\times m $ matrix, and we say that $\Phi$ is a frame for $\K^n$. We write $\phi^j$ for the $j$th row of $\Phi$, and
$\Phi^{(k)}=[\phi^j]_{j=1,j\neq k}^m \in \K^{(n-1) \times m}$ as a $(n-1) \times m$ submatrix of $\Phi$ with the $k$th
row deleted; the $j$th column of $\Phi^{(k)}$ is denoted by $\phi_j^{(k)}\in \K^{n-1}$. We also use the notation $\Phi=[\phi_{i,j}]$ or $[\Phi]_{i,j}=\phi_{i,j}$. For an index set $I \subset
\set{1,\dots,m}$, we denote the restriction of $\Phi$ to these column vectors by $\Phi_I=[\phi_i]_{i \in I}$. 

%
\begin{definition}\label{def:dualframedef}
 Given a frame $\Phi$, another frame $\Psi=\seq{\psi_i}_{i=1}^m\subset\mathcal{H}_n$ is said to be a \emph{dual frame} of $\Phi$ if the following
reproducing formula holds:
\begin{equation}
 x = \sum_{i=1}^m \innerprod{x}{\phi_i}\psi_i \quad \text{for all $x \in \mathcal{H}_n$}.\label{eq:-dual-frames-formula}
\end{equation}
\end{definition}
In matrix notation this definition reads 
\begin{equation}
 \Psi \Phi^\ast = I_n \label{eq:dual-eq-2}
\end{equation}
or, equivalently, $\Phi \Psi^\ast = I_n$, where $I_n$ is the $n \times n$ identity
matrix. So the set of all duals of $\Phi$ is the set of all left-inverses $\Psi$ to $\Phi^\ast$ (or the adjoints
of all right-inverses to $\Phi$).

If $m>n$, then the frame $\seq{\phi_i}_{i=1}^m$ is \emph{redundant}, i.e., it consists of more vectors than necessary for the spanning property. For these frames 
there exist infinitely many dual frames.

Given a collection of $m$ vectors $\Phi=\seq{\phi_i}_{i=1}^m$ in $\mathcal{H}_n$, the \emph{synthesis operator} is defined as
the mapping
\begin{equation*}
   F: \K^m \rightarrow\mathcal{H}_n , \quad (c_i)_{i=1}^m\mapsto \sum_{i=1}^m c_i \phi_i.
 \end{equation*}
The adjoint of the synthesis operator is called the \emph{analysis operator} and is given by
 \begin{equation*}
 F^\ast: \mathcal{H}_n \rightarrow \K^m, \quad x\mapsto \big(\ip{x}{\phi_i}\big)_{i=1}^m.
 \end{equation*} 
In applications the analysis operation typically represents the way data is measured, whereas the synthesis procedure represents the reconstruction of the signal from the data.

 The identification $\mathcal{H}_n \cong \K^n$ translates to identifying the synthesis mapping $F$ with the matrix $\Phi$ and $F^\ast$ with
 $\Phi^\ast$. The \emph{frame operator} $S:\mathcal{H}_n \to \mathcal{H}_n$ is defined as $S=FF^\ast$, that is,
\begin{equation}
  S(x) =\sum_{i=1}^m \ip{x}{\phi_i}\phi_i ; \label{eq:frameop}
\end{equation}
in matrix form the definition reads $S= \Phi \Phi^{*}$. It is easy to see that $\Phi$ is a frame if and
only if the frame operator defined by \eqref{eq:frameop} is positive definite. In this case, the following
reconstruction formula holds
\begin{equation}\label{eq:reconstruction_for_frames}
x = \sum_{j=1}^m  \ip{x}{S^{-1} \phi_i}\phi_i,\text{ for all $x\in\mathcal{H}_n$,}
\end{equation}
hence, $\widetilde{\Psi}:=\seqsmall{S^{-1} \phi_i}_{i=1}^m$ is a dual
frame of $\Phi$ in the sense of Definition~\ref{def:dualframedef}, it
is called the \emph{canonical dual frame}. The canonical dual
$\widetilde{\Psi}$ has frame bounds $1/B$ and $1/A$, where $A$ and $B$
are frame bounds of $\Phi$.  In terms of matrices, $\widetilde\Psi$ is
the Moore-Penrose pseudo-inverse $\Phi^\dagger=\Phi^\ast (\Phi
\Phi^\ast)^{-1}$ of the analysis matrix $\Phi^\ast$.

\subsection{Sparsity and spectrum}
The measure we use for sparsity is the number of non-vanishing entries of the dual frame which we denote, with a slight abuse of notation, by $\|\cdot\|_0$. This convention as well as the associated name $\ell_0$-norm (although it is not a norm) is common practice for vectors and relates to the fact that it can be interpreted as the limit of $\ell_p$-quasinorms, $p\rightarrow 0^+$. In our paper, the term norm will also be used in a sense that includes the $\ell_0$-norm.

As for the spectral properties of a frame $\Phi$,  we will mostly work with singular values $\seqsmall{\sigma_i}_{i=1}^n$ of the synthesis matrix $\Phi$. While one
often studies the eigenvalues $\seqsmall{\lambda_i}_{i=1}^n$ of the frame operator $S_\Phi = \Phi \Phi^\ast$ in related cases, the
advantages of the singular value approach is that one then has the frame directly
accessible, whereby the construction of dual frames becomes, if not straightforward, then at least tractable. Obviously, the values are related by  $\lambda_i = (\sigma_i)^2$ for all $i=1,\dots,n$.

\subsection{The set of all duals}
\label{sec:dual-frames}

By the classical result in \cite{MR1374971}, all duals to a frame
$\Phi$ can be expressed as
\begin{equation}
 \set{S_\Phi^{-1}\phi_i + \eta_i - \sum_{k=1}^m \innerprod{S_\Phi^{-1}\phi_i }{\phi_k} \eta_k}_{i=1}^m,\label{eq:shidong-all-duals}
\end{equation}
where $\eta_i \in\mathcal{H}_n$ for $i=1,\dots,m$ is arbitrary; in matrix form it reads:
\[ 
S_{\Phi}^{-1}\Phi + E(I_m - \Phi^\ast S_{\Phi}^{-1}\Phi),
\]
where $E$ is a matrix representation of an arbitrary linear mapping from $\K^m$ to $\mathcal{H}_n$.  The canonical dual
appearing in these formulas can be replaced by any other dual, that is, if $\Psi$ is any dual, then all duals are given
by
\begin{equation}
 \Psi + E(I_m - \Phi^\ast \Psi). \label{eq:shidong-all-duals-op}
\end{equation}
The matrix $\Phi^\ast \Psi$ is the so-called \emph{cross Gramian} of the dual frame pair. Here $E:\K^m \to
\mathcal{H}_n$ can be an arbitrary linear mapping, but different choices of $E$ can yield the same dual, so there are less degrees of freedoms than entries in $E$. In fact, if one considers $\Phi$ as given and $\Psi$ as our unknown, equation~(\ref{eq:dual-eq-2}) corresponds to solving $n$ independent
linear systems, each of which consists of $n$ equations in $m$ variables. Hence, the set of all duals $\Psi$ to a frame $\Phi$ is an $n(m-n)$-dimensional affine subspace of $\mathrm{Mat}(\K,n \times m)$. 

A natural parametrization of this space is obtained using the singular value decomposition. Let $\Phi=U\Sigma_\Phi V^\ast$ be a full
SVD of $\Phi$, i.e.,  $U\in \K^{n \times n}$ and $V \in \K^{m \times m}$ are unitary and $\Sigma_\Phi \in \R^{n \times
  m}$ is a diagonal matrix whose entries are the singular values $\sqrt{B}= \sigma_1 \ge \sigma_2 \ge \dots \ge \sigma_n = \sqrt{A} >0$ of $\Phi$ as its diagonal entries $\sigma_j \ge 0$ in a non-increasing order. Let $\Psi$ be a frame and define $M_\Psi:=U^\ast \Psi V
\in \K^{n \times m}$, where $U$ and $V$ are the right and left singular vectors of $\Phi$. Then $\Psi$ factors as
$\Psi=U M_\Psi V^\ast$. By $\Phi
\Psi^\ast = \id[n]$, we then see that
\[ \id[n] = U^\ast \id[n] U = U^\ast \Phi \Psi^\ast U = \Sigma_\Phi M_\Psi^{\,\ast}. \]
Therefore, $\Psi$   is a dual frame of $\Phi$ precisely when
\begin{equation}
\Sigma_\Phi M_\Psi^{\,\ast} = \id[n], \label{eq:svd-dual-cond}
\end{equation}
where $\Psi=U M_\Psi V^\ast$. The solutions to \eqref{eq:svd-dual-cond} are given by
\begin{equation}
M_\Psi =
\begin{bmatrix}
  1/\sigma_1 & 0 & \cdots & 0 & s_{1,1} & s_{1,2} & \cdots & s_{1,r} \\
0& 1/\sigma_2 &  & 0 & s_{2,1} & s_{2,2} & \cdots & s_{2,r} \\
&  & \ddots  &  & \vdots & \vdots &  & \vdots \\
0& 0 & \dots  & 1/\sigma_n & s_{n,1} & s_{n,2} & \cdots & s_{n,r} \\
\end{bmatrix}\label{eq:svd-dual-parametric}
\end{equation}
where $s_{i,k}\in \K$ for $i=1,\dots,n$ and $k=1,\dots,r=m-n$. Note that the canonical dual frame is obtained by taking
$s_{i,k}=0$ for all $i=1,\dots,n$ and $k=1,\dots,m-n$.

\section{Sparsity of duals}
\label{sec:sparsity-duals}

The goal of this section is to find the sparsest dual frames of a given frame $\Phi\in\K^{n\times m}$. This question can be formulated as the minimization problem
\begin{equation} \label{eq:0-norm-min-problem}
  \min \norm[0]{\Psi} \quad \text{s.t.} \quad \Phi \Psi^\ast = I_n.
\end{equation}
\begin{definition}
We call the solutions of (\ref{eq:0-norm-min-problem}) \emph{sparsest duals} or \emph{optimal sparse duals}.
\end{definition}
 Let us
consider a small example illustrating that solutions to this minimization problem are not unique.
\begin{example}\label{example:sparsest-not-unique}
  We consider duals of the frame $\Phi=\seq{\phi_1,\phi_2,\phi_3}$ in $\R^2$ defined by
  \begin{equation}
\Phi =
\begin{bmatrix}[r]
  1 & -1 & 0 \\
  1 & 2  &-1
\end{bmatrix}.\label{eq:main-2x3-example}
\end{equation}
All duals $\Psi$ are easily found by Gauss-Jordan elimination to be given by
\begin{equation} 
\label{eq:elim-para-2-3-example}
 \Psi =
\begin{bmatrix}[r]
  \frac{2}{3} & -\frac13 & 0 \\[2pt]
  \frac13 & \frac13  & 0
\end{bmatrix}
+ t_1 \begin{bmatrix}[r]
  \frac{1}{3} & \frac13 & 1 \\[1pt]
  0 & 0  & 0
\end{bmatrix}
+ t_2 \begin{bmatrix}[r]
  0 & 0  & 0  \\
\frac{1}{3} & \frac13 & 1
\end{bmatrix}
, \quad t_1,t_2 \in \R ,
\end{equation}
hence the sparsest duals are seen to be 
\begin{equation}
 \Psi^1 =
\begin{bmatrix}[r]
  0 & -1 & -2 \\
  0 & 0  & -1
\end{bmatrix} , \quad
 \Psi^2 =
\begin{bmatrix}[r]
  \frac23 & -\frac13  & 0 \\[1pt]
  0 & 0  & -1
\end{bmatrix},
\quad \text{and} \quad
\Psi^3 =
\begin{bmatrix}[r]
  \frac23 & 0 & 1 \\
  0 & 0  & -1
\end{bmatrix}, \label{eq:2-by-3-example-sparsest-duals}
\end{equation}
which correspond to the parameter choices $t_2=-1$ and $t_1=-2,0,1$, respectively. These are the only three duals with
$\norm[0]{\Psi}=3$.  According to the measures derived in \cite{krahmer_Sparse_duals}, the non-degenerate dual $\Psi^2$ without zero vectors is preferred. This reflects the fact that it is the only one of the three duals for which one
is not discarding frame coefficients in the reconstruction procedure.
\end{example}

Solving (\ref{eq:0-norm-min-problem}) of sizes much larger than the
$2\times 3$ example in Example~\ref{example:sparsest-not-unique} is
not feasible since the problem is NP-complete. Hence to find sparse duals, one has to settle for approximate solutions of (\ref{eq:0-norm-min-problem}), \eg by convex relaxation or greedy strategies. For such approaches we refer to \cite{krahmer_Sparse_duals,shidong_l1-min-duals}. In the following subsections we will focus on analyzing the possible values of $\norm[0]{\Psi}$, rather than on  how one finds  minimizers. 
An exception is Section~\ref{sec:duals-spectr-tetr}, where finding the sparsest duals is tractable due to the specific structure of the frame.

\subsection{Upper and lower bounds for the minimal sparsity}
\label{sec:existence-results}
In this section we investigate the possible sparsity levels in the set of all dual frames. In other words, we consider
the possible objective function values of the minimization problem~(\ref{eq:0-norm-min-problem}). We start with a
trivial upper bound for the sparsity level of sparse duals.

\begin{lemma}\label{lem:sparse-dual-upper-bound}
  Suppose $\Phi$ is a frame for $\K^n$. Then there exists a dual frame $\Psi$ of $\Phi$ with
  \begin{equation}
 \norm[0]{\Psi} \le n^2.\label{eq:sparse-dual-upper-bound}
\end{equation}
\end{lemma}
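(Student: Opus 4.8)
The plan is to exhibit a dual frame supported on an $n\times n$ block of columns. Since $\Phi$ is a frame for $\K^n$, its rows span $\K^n$ in the sense that $\rank \Phi = n$, so there is an index set $I\subset\set{1,\dots,m}$ with $\card{I}=n$ such that the square submatrix $\Phi_I=[\phi_i]_{i\in I}\in\K^{n\times n}$ is invertible. First I would fix such an $I$. Then I define $\Psi\in\K^{n\times m}$ by placing $\seq{\Phi_I^{-1}}^\ast$ (equivalently $(\Phi_I^\ast)^{-1}$) into the columns indexed by $I$ and setting all other columns to zero; in block notation, $\Psi_I=\seq{\Phi_I^{\ast}}^{-1}$ and $\Psi_{I^c}=0$.

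Next I would verify the duality relation $\Phi\Psi^\ast=I_n$ using \eqref{eq:dual-eq-2}. Because $\Psi$ vanishes off $I$, the product splits as $\Phi\Psi^\ast=\Phi_I\Psi_I^\ast+\Phi_{I^c}\Psi_{I^c}^\ast=\Phi_I\Psi_I^\ast$, and with $\Psi_I^\ast=\bigl(\seq{\Phi_I^\ast}^{-1}\bigr)^\ast=\Phi_I^{-1}$ this equals $\Phi_I\Phi_I^{-1}=I_n$. Hence $\Psi$ is a dual frame of $\Phi$ (note it is automatically a frame, since it satisfies the reproducing formula \eqref{eq:-dual-frames-formula} and therefore has full rank $n$). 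Finally, $\Psi$ has nonzero entries only within the $n$ columns indexed by $I$, so $\norm[0]{\Psi}\le n\cdot n=n^2$, which is \eqref{eq:sparse-dual-upper-bound}.

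There is essentially no serious obstacle here — the only thing to be careful about is the existence of an invertible $n\times n$ submatrix $\Phi_I$, which is immediate from $\rank\Phi=n$ (a consequence of the lower frame bound $A>0$), and the routine bookkeeping that $\seq{\Phi_I^\ast}^{-1}=\seq{\Phi_I^{-1}}^\ast$. One could phrase the construction slightly more conceptually by saying that $\Phi_I$ is itself a frame (a basis) for $\K^n$, its canonical/unique dual is $\seq{\Phi_I^{\ast}}^{-1}$, and padding that dual with zero vectors in the positions $I^c$ yields a dual of the full frame $\Phi$; this makes transparent why the bound $n^2$ is just "$n$ columns, each of length $n$."
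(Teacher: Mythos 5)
Your proposal is correct and follows essentially the same route as the paper: select $n$ linearly independent columns of $\Phi$, take the (unique, bi-orthogonal) dual of that basis --- which in matrix form is exactly $(\Phi_I^\ast)^{-1}$ --- and pad with zero columns. The paper phrases this in terms of frame vectors rather than block matrices, but the construction and the counting are identical.
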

\begin{proof}
  Choose $J \subset \{1,2,\dots, m\}$ such that $\card{J} =n$ and the vectors $\{\phi_i\}_{i\in J}$ are linearly
  independent. This is always possible, as otherwise, the $\phi_i$ do not span the space and hence do not form a frame. Let $\{\psi_i\}_{i \in J}$ be the unique (bi-orthogonal) dual of $\{\phi_i\}_{i\in J}$, and set $\psi_i=0$
  for $i \notin J$. Then obviously $\norm[0]{\Psi} \le n^2$ holds. 
\end{proof}

Similarly to Lemma~\ref{lem:sparse-dual-upper-bound} we see that the corresponding trivial lower bound on the dual frame
sparsity is $n$, that is, any dual frame satisfies $\norm[0]{\Psi}\ge n$. However, we can give a much better lower bound
in terms of the spark of the $\Phi^{(j)}$'s. Here the spark of a matrix $\Phi\in \mathrm{Mat}(\K,n \times m) $ is
defined as the smallest number of linear dependent columns of $\Phi$ and denoted by $\spark{\Phi}$. In case $\Phi$ is an
invertible $n \times n$ matrix, one sets $\spark{\Phi}=n+1$. 

\begin{theorem}
\label{thm:lower-bound-sparsity}
Suppose $\Phi$ is a frame for $\K^n$. Then any dual frame $\Psi$ of $\Phi$ satisfies
  \begin{equation}
 \norm[0]{\Psi} \ge \sum_{j=1}^n \spark{\Phi^{(j)}}.\label{eq:sparse-dual-lower-bound}
\end{equation}
\end{theorem}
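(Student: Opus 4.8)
The plan is to argue row by row. Writing $\psi^k$ for the $k$th row of $\Psi$ (regarded as a vector in $\K^m$), the $k$th column of the identity $\Phi\Psi^\ast = \id[n]$ reads $\Phi\,\overline{\psi^k} = e_k$, where $e_k$ is the $k$th standard basis vector of $\K^n$; so each conjugated row of $\Psi$ solves a linear system whose right-hand side is a coordinate vector. Since $\norm[0]{\Psi} = \sum_{k=1}^n \norm[0]{\psi^k}$, it suffices to prove the per-row bound $\norm[0]{\psi^k} \ge \spark{\Phi^{(k)}}$ for each $k$ and then sum over $k$ (renaming the index).

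For the per-row bound, fix $k$ and let $I \defined \supp(\psi^k) \subset \set{1,\dots,m}$. Since $e_k \neq 0$, the set $I$ is nonempty. Restricting $\Phi\,\overline{\psi^k} = e_k$ to the columns indexed by $I$ gives $\Phi_I v = e_k$, where $v \in \K^{\card{I}}$ is the restriction of $\overline{\psi^k}$ to $I$ and has, by definition of the support, all entries nonzero. Removing the $k$th coordinate from this vector identity annihilates the single nonzero entry of $e_k$, leaving $(\Phi^{(k)})_I\, v = 0$ with $v$ nonzero; hence the columns $\setpropsmall{\phi_i^{(k)}}{i \in I}$ of $\Phi^{(k)}$ are linearly dependent, and by the definition of spark, $\card{I} \ge \spark{\Phi^{(k)}}$. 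This is exactly $\norm[0]{\psi^k} \ge \spark{\Phi^{(k)}}$, and summing over $k$ completes the argument.

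A couple of minor points to record for completeness: $\spark{\Phi^{(k)}}$ is finite here because $\Phi^{(k)}$ is $(n-1)\times m$ with $m \ge n$, so its $m$ columns are automatically dependent; and the per-row bound degrades gracefully in the trivial cases (e.g.\ if some $\phi_i^{(k)} = 0$ then $\spark{\Phi^{(k)}} = 1$, matching the obvious fact that each row of a dual has at least one nonzero entry). I do not foresee a real obstacle: the entire content of the proof is the observation that the support of any row of a dual frame must index a linearly dependent column set of the row-deleted matrix $\Phi^{(k)}$, and everything else is bookkeeping and a one-line dimension count. The genuinely substantive question — whether this lower bound is attained, and how it compares with the $n^2$ upper bound of Lemma~\ref{lem:sparse-dual-upper-bound} — lies beyond the present statement.
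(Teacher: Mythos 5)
Your proof is correct and follows essentially the same route as the paper: both arguments observe that the $k$th row of $\Psi$ satisfies $\Phi(\psi^k)^\ast = e_k$, so deleting the $k$th coordinate shows that the columns of $\Phi^{(k)}$ indexed by $\supp\psi^k$ are linearly dependent, giving $\norm[0]{\psi^k}\ge\spark{\Phi^{(k)}}$, and then sum over $k$. The paper phrases the key step as orthogonality of $\psi^k$ to the other rows of $\Phi$, which is just a different wording of your coordinate-deletion step.
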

\begin{proof}
  Fix $j =1,\dots,n$. By equation~(\ref{eq:dual-eq-2}), the $j$th row of $\Psi$, that is, $\psi^j$, must be orthogonal to $\phi^k$ for
  $k=1,\dots,j-1,j+1,\dots,n$. Therefore, we have that 
\begin{equation}  \label{eq:ortho-req}
0_{n-1}=\Phi^{(j)}(\psi^j)^\ast=[\phi^{(j)}_i]_{i\in\supp \psi^j}\, \psi^j \vert_{\supp
  \psi^j},
\end{equation}
where $0_{n-1}$ is the $(n-1)\times 1$ zero column vector.  Hence, the $\normsmall[0]{\psi^{j}}$ columns
$\setpropsmall{\phi^{(j)}_i}{j\in \supp{\psi^j}}$ of $\Phi^{(j)}$ are linearly dependent, which implies that
$\normsmall[0]{\psi^j} \ge \spark{\Phi^{(j)}}$ and thus the result.
\end{proof}

It is easy to see that the lower bound on the support given in Theorem~\ref{thm:lower-bound-sparsity} is not always
sharp. Indeed, if $\Phi$ has two linearly dependent columns, one has $\spark{\Phi^{(j)}}=2$ for all $j$, but there is
not necessarily a dual with only two entries in each row. In order to predict the maximally possible sparsity, one needs
a more refined notion that excludes this type of effects arising from linearly dependent columns. More precisely, denote
by $\spark[j]{\Phi}$ the smallest number of linearly dependent columns in $\Phi^{(j)}$ such that the corresponding
columns in $\Phi$ are linear independent. Obviously, we have that $\spark[j]{\Phi} \ge \spark{\Phi^{(j)}}$ for every
$j=1,\dots,n$.  Using this refined notation of spark we can improve the bound obtained in
Theorem~\ref{thm:lower-bound-sparsity} for optimal sparse duals. Even more, the following result exactly describes the
sparsity level of the sparsest dual.
\begin{proposition}
\label{thm:sparsity-of-sparsest-dual}
Suppose $\Phi$ is a frame for $\K^n$. Then the sparsest dual frame $\Psi$ of $\Phi$ satisfies
  \begin{equation}
 \norm[0]{\Psi} = \sum_{j=1}^n\spark[j]{\Phi}.
\end{equation}
\end{proposition}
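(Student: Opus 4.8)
The plan is to establish the two inequalities separately, upgrading the argument in Theorem~\ref{thm:lower-bound-sparsity} for the lower bound and giving an explicit construction for the upper bound. For the lower bound $\norm[0]{\Psi} \ge \sum_{j=1}^n \spark[j]{\Phi}$, I would fix $j$ and revisit equation~(\ref{eq:ortho-req}): the support $I_j := \supp \psi^j$ indexes columns $\{\phi_i^{(j)}\}_{i \in I_j}$ of $\Phi^{(j)}$ that are linearly dependent, witnessed by the nonzero coefficient vector $\psi^j|_{I_j}$. The key additional observation is that the corresponding columns $\{\phi_i\}_{i \in I_j}$ of the full matrix $\Phi$ must be linearly \emph{independent}: if some nontrivial combination $\sum_{i \in I_j} c_i \phi_i = 0$ held, then in particular the $j$th coordinate would give $\sum_{i \in I_j} c_i \phi_{j,i} = 0$, and combined with $\Phi^{(j)} (c_i)_{i\in I_j}^\ast = 0$ this would force $\Phi (c_i)_{i\in I_j}^\ast = 0$, contradicting that $\Phi$ has rank $n$ only if $|I_j| \le n$ — wait, more carefully, it contradicts linear independence of a set of $\le n$ columns of a rank-$n$ matrix only when that set genuinely spans; the honest route is: $\psi^j \ne 0$ is needed (a dual has $\Psi\Phi^\ast = I_n$, so no row of $\Psi$ vanishes), hence $I_j \ne \emptyset$, and then linear independence of $\{\phi_i\}_{i\in I_j}$ follows because any dependence among them that also kills the $j$th coordinate — which any dependence supported on a dependent set of $\Phi^{(j)}$ need not do — so instead one argues: pick a \emph{minimal} dependent subset $I_j' \subseteq I_j$ of $\Phi^{(j)}$; minimality forces the witnessing coefficients to be all nonzero and forces $\{\phi_i\}_{i \in I_j'}$ to be independent in $\Phi$ (else a smaller dependent subset of $\Phi^{(j)}$ exists, or one gets a dependence of $\Phi$ among $\le n$ columns that restricts to a dependence of $\Phi^{(j)}$, contradicting minimality). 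Thus $|I_j'| \ge \spark[j]{\Phi}$ by definition of the refined spark, and a fortiori $\norm[0]{\psi^j} = |I_j| \ge |I_j'| \ge \spark[j]{\Phi}$. Summing over $j$ gives the lower bound.

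For the matching upper bound I would construct, for each row index $j$, a vector $\psi^j \in \K^m$ with $\norm[0]{\psi^j} = \spark[j]{\Phi}$ satisfying the two requirements that make $\Psi = [\psi^j]_{j=1}^n$ a dual: (i) $\Phi^{(j)} (\psi^j)^\ast = 0$ (orthogonality to all rows $\phi^k$, $k \ne j$), and (ii) $\phi^j (\psi^j)^\ast = 1$ (the diagonal normalization $\langle \phi^j, \psi^j\rangle = 1$, i.e.\ $[\Phi \Psi^\ast]_{j,j} = 1$), which together are exactly $\Phi \Psi^\ast = I_n$. By definition of $\spark[j]{\Phi}$, choose an index set $I_j$ with $|I_j| = \spark[j]{\Phi}$ such that $\{\phi_i^{(j)}\}_{i \in I_j}$ is linearly dependent while $\{\phi_i\}_{i \in I_j}$ is linearly independent. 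Dependence of $\{\phi_i^{(j)}\}_{i \in I_j}$ gives a nonzero $c = (c_i)_{i\in I_j}$ with $\sum_{i\in I_j} c_i \phi_i^{(j)} = 0$; independence of $\{\phi_i\}_{i\in I_j}$ forces $\sum_{i\in I_j} c_i \phi_i \ne 0$, and since its $\Phi^{(j)}$-part vanishes this sum is a nonzero multiple of $e_j$, so $\sum_{i \in I_j} c_i \phi_{j,i} =: \gamma \ne 0$. Rescaling $c$ by $1/\gamma$ and setting $\psi^j|_{I_j} = \overline{(c_i/\gamma)}_{i\in I_j}$ (taking conjugates to account for the adjoint), $\psi^j|_{I_j^c} = 0$, we get both (i) and (ii). Moreover $\norm[0]{\psi^j} \le |I_j| = \spark[j]{\Phi}$, and by the lower bound argument applied to this particular dual it is exactly $\spark[j]{\Phi}$; hence the constructed $\Psi$ is a dual with $\norm[0]{\Psi} = \sum_j \spark[j]{\Phi}$, and by the lower bound no dual does better, so this is the sparsest value.

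The main obstacle is getting the minimality/independence bookkeeping in the lower bound exactly right: one must be careful that $\norm[0]{\psi^j}$ alone need not equal $\spark[j]{\Phi}$ for an arbitrary dual — the support $I_j$ might be a dependent set of $\Phi^{(j)}$ that is also dependent in $\Phi$, which is allowed — so the inequality $\norm[0]{\psi^j} \ge \spark[j]{\Phi}$ must be extracted by passing to a minimal dependent sub-support and verifying that minimality, together with $\psi^j \ne 0$ and $\Phi$ having full row rank, forces the corresponding columns of $\Phi$ to be independent. A clean way to phrase this: among all subsets $K \subseteq \supp\psi^j$ that are dependent in $\Phi^{(j)}$, take one of minimal size; if $\{\phi_i\}_{i\in K}$ were dependent in $\Phi$, the dependence vector would have to be non-proportional to the $\Phi^{(j)}$-witness (since that witness does not extend to a dependence of $\Phi$, as its $\Phi$-image is a nonzero multiple of $e_j$), and a suitable linear combination of the two vectors would have strictly smaller support and still witness dependence in $\Phi^{(j)}$, contradicting minimality of $K$. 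Everything else — the SVD parametrization, existence of linearly independent subsets, the fact that duals have no zero rows — is routine given the setup already in the excerpt.
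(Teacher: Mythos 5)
Your construction for the matching upper bound is correct and coincides with the paper's. The gap is in your lower bound. You try to establish $\normsmall[0]{\psi^j}\ge\spark[j]{\Phi}$ for an \emph{arbitrary} dual by passing to a dependent subset $K\subseteq\supp \psi^j$ of $\Phi^{(j)}$ of minimal size and claiming that minimality forces the columns $\setsmall{\phi_i}_{i\in K}$ of $\Phi$ to be linearly independent. That step fails. For such a minimal $K$ the kernel of $[\phi_i^{(j)}]_{i\in K}$ is one-dimensional and spanned by a full-support witness $d$; hence if $\setsmall{\phi_i}_{i\in K}$ \emph{is} dependent in $\Phi$, that dependence is automatically proportional to $d$, and the $\Phi$-image of $d$ is the \emph{zero} multiple of $e_j$ --- the opposite of your parenthetical claim. (You are conflating the witness of $K$ with the restriction of $\psi^j$ to $K$; only for $K=\supp\psi^j$ is the relevant coefficient vector guaranteed to map to $e_j$ under $\Phi$.) The situation you need to exclude really occurs: if, say, $\phi_4=2\phi_5$, then $e_4-2e_5$ lies in the kernel of $\Phi$ and may be added to any row of a dual, producing a dual row whose support contains $\setsmall{4,5}$; this $K=\setsmall{4,5}$ is a minimal dependent set of $\Phi^{(j)}$ whose columns are also dependent in $\Phi$, and your argument yields neither a contradiction nor a bound.

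There are two ways to close the gap. The paper's proof uses the hypothesis that $\Psi$ is a \emph{sparsest} dual, which you never invoke: if $\setsmall{\phi_i}_{i\in\supp\psi^j}$ were dependent in $\Phi$, one could subtract a suitable multiple of that dependence from $(\psi^j)^\ast$ to strictly shrink the support while preserving $\Phi(\psi^j)^\ast=e_j$, contradicting optimality of $\Psi$; hence the \emph{full} support is independent in $\Phi$ and dependent in $\Phi^{(j)}$, and $\normsmall[0]{\psi^j}\ge\spark[j]{\Phi}$ follows with no minimal-subset bookkeeping. Alternatively, if you want the bound for every dual (it does hold in that generality), the object to minimize is not a dependent subset of $\Phi^{(j)}$ but the support of a vector $d$ with $\Phi d$ equal to a \emph{nonzero} multiple of $e_j$: any $\Phi$-dependence supported inside such a minimal support could be subtracted off to shrink it further while preserving the nonzero image, so the minimal $T$ has $[\phi_i]_{i\in T}$ independent while $[\phi_i^{(j)}]_{i\in T}$ is dependent, giving $\abs{T}\ge\spark[j]{\Phi}$.
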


\begin{proof}
  Let $\Psi$ be a sparsest dual of $\Phi$. Then $\bigl(\phi_k^{(j)}\bigr)_{k\in \supp \psi^j}$ must be linearly dependent in
  order to allow for $\Phi^{(j)} \left(\psi^j\right)^*=0_{n-1}$. However, the frame vectors $\seq{\phi_k}_{k\in \supp
    \psi^j}$ must be linearly independent, as otherwise one of these columns can be expressed through the others, which
  would allow for the construction of $\tilde \psi^j$ with $\supp\tilde\psi^j\subsetneq\supp \psi^j$ such that
  $\Phi\bigl(\tilde\psi^j \bigr)^\ast=e_j$. This in turn would imply that the frame, whose synthesis matrix has rows $\psi^1, \dots, \psi^{j-1}, \tilde\psi^j, \psi^{j+1}, \dots, \psi^n$, is also a dual frame of $\Phi$, so $\Psi$ is not the sparsest dual.
 Thus $|\supp \psi^j| \geq\spark[j]{\Phi}$,
  which implies that no dual frame can have less than $\sum_{j=1}^n\spark[j]{\Phi}$ non-zero entries.

  To show the existence of dual frames with this sparsity, let $S$ be a set of size $\spark[j]{\Phi}$ such that
  $\seq{\phi_k}_{k\in S}$ is a set of independent columns of $\Phi$ such that the corresponding columns of $\Phi^{(j)}$
  are linearly dependent. That is, there exist $\left(\lambda_k\right)_{k\in S}$ such that $\sum_{k\in S} \lambda_k
  (\phi^{(j)})_k =0$, but $(\sum_{k\in S} \lambda_k \phi_k)_j =a \neq 0$. Hence the vector $\psi^j$ given by
  \begin{equation}
    \psi^j_k=\begin{cases}
      \frac{\lambda_k}{a} &\text{ if } k\in S,\\
      0 & \text{ else},
       \end{cases}
   \end{equation}
   satisfies $\Phi (\psi^j)^\ast=e_j$, and the matrix $\Psi$ with rows $\psi^j$ yields a dual with a frame
   matrix support size of $\sum_{j=1}^n\spark[j]{\Phi}$, as desired.
\end{proof}

\subsection{Frames with the lower sparsity bound $n^2$}
\label{sec:ex-lower-bound-n^2}

In Lemma~\ref{lem:sparse-dual-upper-bound} we saw that it is always possible to find a dual frames with sparsity level
$n^2$. In this section we will show that for a large class of frames this is actually the best, or rather the sparsest,
one can achieve. 

Recall that a $n \times m$ matrix is said to be in general position if any sub-collection of $n$ (column) vectors is
linear independent, that is, if any $n \times n$ submatrix is invertible.  The following result states that if the
projection of $\Phi$ onto any of the $n$ coordinates hyperplanes of dimension $n-1$ is in general position, then the
frame has maximal dual sparsity level. This observation follows from
Theorem~\ref{thm:lower-bound-sparsity}. 
\begin{theorem}
\label{thm:general-position-has-all-duals-n-squared}
Suppose $\Phi$ is a frame for $\K^n$ such that the submatrix $\Phi^{(j)}$ is in general position for every
$j=1,\dots,n$. Then any dual frame $\Psi$ of $\Phi$ satisfies
  \begin{equation}
 \norm[0]{\Psi} \ge n^2.\label{eq:sparse-dual-lower-bound-for-GP}
\end{equation}
In particular, the sparsest dual satisfies $\norm[0]{\Psi} = n^2$
 \end{theorem}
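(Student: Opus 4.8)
The plan is to apply Theorem~\ref{thm:lower-bound-sparsity} directly, together with the observation that being in general position forces the relevant sparks to be as large as possible. First I would recall that for any $j$, the matrix $\Phi^{(j)} \in \K^{(n-1)\times m}$ has exactly $n-1$ rows, so every collection of $n$ of its columns must be linearly dependent (more columns than the row dimension); hence $\spark{\Phi^{(j)}} \le n$ always. The content of the hypothesis is the reverse inequality: if $\Phi^{(j)}$ is in general position, then every sub-collection of $n-1$ columns is linearly independent, so no collection of fewer than $n$ columns of $\Phi^{(j)}$ can be dependent. Combining these two facts yields $\spark{\Phi^{(j)}} = n$ for each $j = 1,\dots,n$.

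Then I would simply plug this into the lower bound of Theorem~\ref{thm:lower-bound-sparsity}: any dual frame $\Psi$ satisfies
\[
  \norm[0]{\Psi} \;\ge\; \sum_{j=1}^n \spark{\Phi^{(j)}} \;=\; \sum_{j=1}^n n \;=\; n^2,
\]
which is exactly \eqref{eq:sparse-dual-lower-bound-for-GP}. For the "in particular" clause, I would invoke Lemma~\ref{lem:sparse-dual-upper-bound}, which guarantees a dual with $\norm[0]{\Psi} \le n^2$; since every dual also satisfies $\norm[0]{\Psi}\ge n^2$, that dual is sparsest and attains equality, so $\norm[0]{\Psi} = n^2$ for the sparsest dual.

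There is no real obstacle here — the theorem is essentially a corollary of the two preceding results. The only point requiring a word of care is the edge case $n=1$: then $\Phi^{(j)}$ is the empty ($0\times m$) matrix, general position is vacuous, $\spark{\Phi^{(j)}}$ should be read as $1$, and the bound $\norm[0]{\Psi}\ge 1 = n^2$ holds trivially (any dual of a frame in $\K^1$ has at least one nonzero entry). One could also note that the general-position hypothesis on $\Phi^{(j)}$ forces $m \ge n$, consistent with $\Phi$ being a frame, but this is automatic. I would keep the proof to three or four lines, citing Theorem~\ref{thm:lower-bound-sparsity} for the lower bound and Lemma~\ref{lem:sparse-dual-upper-bound} for sharpness.
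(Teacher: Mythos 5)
Your proposal is correct and matches the paper's intent exactly: the paper itself remarks just before the theorem that the observation follows from Theorem~\ref{thm:lower-bound-sparsity}, and your computation $\spark{\Phi^{(j)}}=n$ under the general-position hypothesis (together with Lemma~\ref{lem:sparse-dual-upper-bound} for sharpness) is precisely that route, while the paper's written proof merely unfolds the same kernel argument directly on the rows of $\Psi$. No gaps; this is essentially the same proof.
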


\begin{proof}
  Fix $j =1,\dots,n$, and let us consider $c=(c_i)_{i=1}^m=(\psi^j)^\ast$ as a $m \times 1$ column vector to be chosen.
Then
   \begin{equation}
   \sum_{i=1}^m c_i \phi^{(j)}_i = 0_{n-1}. \label{eq:lin-dependency}
   \end{equation}
   Since $\Phi^{(k)}$ is in general position, any collection of $n-1$ vectors from $(\phi^{(k)}_i)_{i=1}^m$ is linearly
   independent. Hence, equation~(\ref{eq:lin-dependency}) can only be satisfied if $c$ has more than $n-1$ nonzero
   entries, that is, if $\card{\setprop{i}{c_i\neq 0}} \ge n$. Hence, each row of $\Psi$ must have at least $n$ nonzero
   entries, which yields a total of $n^2$ nonzero entries.
\end{proof}
We illustrate this result with a number of examples of frames which are well-known to be in general position, and which thus do not allow for dual frames with less than $n^2$ non-vanishing entries.

\begin{example}
\label{cor:n-and-m-with-all-duals-n-squared}
For any $n,m \in \N$ with $m\ge n$, let $a_i > 0$, $i =1, \dots, m$ with $a_i \neq a_j$ for all $i \neq j$, and let $b_j>0$, $j =1,
  \dots, n$ with $b_j \neq b_i$, $j \neq i$. We then define the generalized Vandermonde frames as
  \begin{equation}
   \Phi=
    \begin{bmatrix}
      a_1^{b_1} & a_1^{b_2} & \cdots & a_1^{b_m} \\
        \vdots & \vdots & & \vdots \\
      a_n^{b_1} & a_n^{b_2} & \cdots & a_n^{b_m} \\
    \end{bmatrix}.\label{eq:gen-vandermonde-frame}
  \end{equation}
It is not difficult to see that the submatrix $\Phi^{(j)}$ is in general position for every
   $j=1,\dots,m$.
This yields a deterministic construction for a frame $\Phi \in \K^{n \times m}$ such that any dual frame $\Psi$ of
$\Phi$ satisfies
\[ \norm[0]{\Psi} \ge n^2.\]
\end{example}

\begin{remark}
  We remark that  any minor of a generalized Vandermonde frames $\Phi$ is non-zero, which, in particular, implies that
  the submatrices $\Phi^{(j)}$ are in general position for every $j=1,\dots,m$. Neither of these properties hold, in
  general, for standard Vandermonde matrices. A standard Vandermonde frame $\Phi$ is obtained as the transpose of the
  matrix in \eqref{eq:gen-vandermonde-frame} with $b_j=j-1$ for $j=1,\dots,n$ and $a_i \in \K$ for
  $i=1,\dots,m$. Such a standard Vandermonde frame is again in general position, see also \cite{fullspark}. To see
  that this is not necessarily the case for all $\Phi^{(j)}$, take $n=3$ and let $a_{i_0}=-1$ and $a_{i_1}=1$ for some
  $i_0$ and $i_1 \in \set{1,\dots,m}$. Then the submatrix $\Phi^{(2)}$ restricted to the indices $\set{i_0,i_1}$ is a
  2-by-2 matrix of all ones, hence $\Phi^{(2)}$ cannot be not in general position.
\end{remark}

The generalized Vandemonde frames introduced above are almost never used in applications due to the fact that for $m\gg n$ they are necessarily
poorly conditioned, both in the traditional sense and in the frame theory sense as introduced in \cite{krahmer_Sparse_duals}. The following example yields well-conditioned, deterministic frames for which no dual frame with less than $n^2$ entries can exist either. 

\begin{example}
\label{thm:n-2-sparsity-HTF-m-prime}
Let $n \in \N$, and let $m$ be prime. Let $\Phi$ be a partial FFT matrix or a harmonic tight frame matrix of size $n
\times m$. Then any $\Phi^{(j)}$ is in general position, as the determinant of any
  $(n-1)\times (n-1)$ submatrix of $\Phi$ is non-zero. This is a consequence of Chebotarev theorem about roots of unity stating that any minor of an $m \times m$ DFT matrix is non-zero whenever $m$ is prime \cite{stevenhagen:1996, tao:2005, MR2299819}. 
Thus again, the sparsest dual frame $\Psi$ of $\Phi$ satisfies
\[ \norm[0]{\Psi} = n^2.\]
\end{example}


\begin{example}
 Let $n \in \N$ be prime and let $m=n^2$. It was shown in \cite{KraPfaRa08} that for almost every $a\in\C^n$, the Gabor frame generated by $a$ has the property that any minor of its frame matrix is non-zero. We conclude as before that the sparsest dual frame satisfies
\[ \norm[0]{\Psi} = n^2.\]
\end{example}

In fact, the property of having a sparsest dual with sparsity $n^2$ is a \emph{generic}
property. To precisely formulate this observation, let $\mathcal{F}(n,m)$ be the set of all
frames of $m$ vectors in $\mathcal{H}_n$, let $\mathcal{N}(n,m)$ be the set of all frames of $m$ vectors in $\mathcal{H}_n$ whose
sparsest dual has $n^2$ non-zero entries, and let $\mathcal{P}(n,m)$ be the the set of all frames $\Phi$ which
satisfy $\spark{\Phi^{(j)}}=n$ for all $j=1,\dots, n$. We remark that the following result can be obtained using techniques from algebraic geometry \cite{fullspark}; we include an elementary proof for illustrative purposes.

\begin{lemma}
  \label{thm:P-dense-and-large}
Suppose $m \ge n$. Then the following assertions hold:
\begin{compactenum}[(i)]
\item The set $\mathcal{P}(n,m)$ is open and dense in $\K^{n\times m}$, that is, $\mathcal{P}(n,m)^c$ is closed and
  nowhere dense.
\item The set $\mathcal{P}(n,m)^c$ is of (induced Euclidean) measure zero.
\end{compactenum}
\end{lemma}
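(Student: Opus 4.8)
The plan is to prove both assertions by expressing $\mathcal{P}(n,m)^c$ as a finite union of vanishing loci of certain polynomials in the entries of $\Phi$, and then invoking standard facts about zero sets of (not identically zero) polynomials. Recall that $\spark{\Phi^{(j)}} = n$ means the $(n-1)\times m$ matrix $\Phi^{(j)}$ is in general position, i.e.\ every $(n-1)\times(n-1)$ submatrix is invertible. So $\Phi \in \mathcal{P}(n,m)$ precisely when, for every $j \in \{1,\dots,n\}$ and every index set $I \subset \{1,\dots,m\}$ with $\card{I} = n-1$, the corresponding minor of $\Phi^{(j)}$ is nonzero. Each such minor is a polynomial $p_{j,I}$ in the entries of $\Phi$; hence
\[
  \mathcal{P}(n,m)^c = \bigcup_{j=1}^n \bigcup_{\substack{I \subset \{1,\dots,m\}\\ \card{I}=n-1}} \set{\Phi \in \K^{n\times m} : p_{j,I}(\Phi) = 0},
\]
a \emph{finite} union of algebraic hypersurfaces.

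First I would establish that each polynomial $p_{j,I}$ is not identically zero. This is the crucial point, and the main obstacle: one must exhibit, for each $j$ and $I$, at least one matrix $\Phi$ for which that particular $(n-1)\times(n-1)$ minor of $\Phi^{(j)}$ does not vanish. This is easy — deleting row $j$ and keeping only the columns in $I$, one can simply choose those $n-1$ columns (restricted to the remaining $n-1$ coordinates) to form the identity matrix and fill the rest of $\Phi$ arbitrarily (say, so that $\Phi$ still has rank $n$, which is possible since $m \ge n$); then $p_{j,I}(\Phi) = 1 \neq 0$. Once we know each $p_{j,I} \not\equiv 0$, the zero set $Z_{j,I} := \setprop{\Phi}{p_{j,I}(\Phi)=0}$ is a proper algebraic subset of $\K^{n\times m}$.

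For assertion (i): each $Z_{j,I}$ is closed (preimage of $\{0\}$ under a continuous map), and a proper zero set of a nonzero polynomial has empty interior — if it contained a ball, the polynomial would vanish on an open set and hence identically, a contradiction (using that $\K^{n\times m}$ is a connected real or complex vector space, so a polynomial vanishing on an open set is the zero polynomial). Thus each $Z_{j,I}$ is closed and nowhere dense, and a finite union of closed nowhere dense sets is closed and nowhere dense; therefore $\mathcal{P}(n,m)^c$ is closed and nowhere dense, i.e.\ $\mathcal{P}(n,m)$ is open and dense. Finally I should note that $\mathcal{P}(n,m) \subset \mathcal{F}(n,m)$, but since openness/denseness is claimed as a subset of $\K^{n\times m}$ this is not strictly needed; I may add a remark that $\mathcal{F}(n,m)$ itself is open and dense (its complement being the set of rank-deficient matrices, contained in the vanishing locus of the $n\times n$ minors), so the statement also holds relative to $\mathcal{F}(n,m)$.

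For assertion (ii): the zero set of a polynomial on $\K^{n\times m}$ that is not identically zero has Lebesgue measure zero in the ambient Euclidean space (of real dimension $nm$ over $\R$, or $2nm$ over $\C$) — this is a standard fact, provable by induction on the number of variables using Fubini's theorem, since for fixed values of all but one variable the polynomial becomes a nonzero univariate polynomial with at most finitely many roots. Hence each $Z_{j,I}$ is a null set, and a finite union of null sets is null, so $\mathcal{P}(n,m)^c$ has measure zero. This completes the proof. The only genuinely substantive step is the non-vanishing of the individual minor polynomials, and that is handled by the explicit construction above; everything else is routine topology and measure theory.
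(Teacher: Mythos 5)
Your proof is correct, but it takes a genuinely different route from the paper's. You write $\mathcal{P}(n,m)^c$ directly as a finite union of vanishing loci of the minor polynomials $p_{j,I}$ on all of $\K^{n\times m}$, verify each $p_{j,I}\not\equiv 0$ with a per-minor witness (an identity submatrix), and then invoke the standard facts that the zero set of a nonzero polynomial is closed with empty interior and has Lebesgue measure zero. The paper instead restricts to the line $\Phi_t=t\Phi_1+(1-t)\Phi_0$ through the given matrix and a single explicit member of $\mathcal{P}(n,m)$ (a generalized Vandermonde frame), forms the \emph{univariate} polynomial $P(t)=\prod_{j}\prod_{I}\det([\Phi_t^{(j)}]_I)$, and uses $P(1)\neq 0$ plus finiteness of roots of a univariate polynomial to get density, with a quantitative bound ($P$ has at most $n^2\binom{m}{n-1}$ zeros on any such line); part (ii) is only sketched there as ``a similar integration argument.'' Your approach buys a cleaner and more complete treatment of openness (which the paper's density argument does not really address) and of the measure-zero claim, at the cost of citing the multivariate polynomial facts; the paper's approach is more elementary in that it needs only the fundamental theorem of algebra, but it requires exhibiting one matrix in $\mathcal{P}(n,m)$ satisfying \emph{all} the minor conditions simultaneously, whereas you only need one witness per minor. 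One small point to tidy: $\mathcal{P}(n,m)$ is defined as a set of \emph{frames}, so $\mathcal{P}(n,m)^c$ also contains rank-deficient matrices all of whose $\Phi^{(j)}$ are in general position (these exist, e.g.\ the all-ones $2\times 2$ matrix); your closing remark that the rank-deficient matrices lie in the vanishing locus of the $n\times n$ minors is exactly what is needed, so that union should be included in your displayed decomposition from the start rather than added as an afterthought.
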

\begin{proof}
  (i) Let $\Phi_0 \in \K^{n \times m}$ be a given. Pick $\Phi_1 \in \mathcal{P}(n,m)$, \eg a generalized Vandermonde
  frame. Define
  \[ 
  \Phi_{t} = t \Phi_1 + (1-t) \Phi_0 \qquad t \in \K.
  \] 
  We claim that only finitely many of $\setprop{\Phi_t}{t\in\K}$ are not in $\mathcal{P}(n,m)$. To see this we introduce
  the following polynomial in t:
\[
P(t) = \prod_{j=1}^n \prod_{I \in S} \det([\Phi^{(j)}_t]_{I}),
\]
where $S$ is the collection of all subsets of $\set{1,\dots,m}$ of size $n-1$ and $[\Phi^{(j)}_t]_{I}\in \K^{(n-1)\times
  (n-1)}$ is the matrix $\Phi_t$ without row $j$ and restricted to the columns in the index set $I$. Clearly,
$\card{S}=\binom{m}{n-1}$, and $P$ is therefore of degree $n^2\binom{m}{n-1}$. By definition, we see that
\begin{align*}
  & P(t) = 0 \\
\Leftrightarrow \;& \det([\Phi^{(j)}_t]_{I})=0 \quad \text{for some $I \in S$ and $j=1,\dots,n$} \\
\Leftrightarrow \;& \spark{\Phi^{(j)}_t} < n \quad \text{for some $j=1,\dots,n$} 
\end{align*}
Therefore, $P(t) \neq 0$ if and only if $\Phi_t \in \mathcal{P}(n,m)$. Since $P(1)\neq 0$, the polynomial $P(t)$ is not
the zero polynomial hence it has finitely many zeros, to be precise, it has at most $n^2\binom{m}{n-1}$
zeros. Therefore, we can choose $\abs{t}$ arbitrarily small such that $\Phi_t \in \mathcal{P}(n,m)$. Consequently,
$\Phi$ is arbitrarily close to a frame in $\mathcal{P}(n,m)$.

Part (ii) follows in a similar setup as in the prove of (i) using an integration argument.
\end{proof}

By Theorem~\ref{thm:lower-bound-sparsity} we see that $\mathcal{P}(n,m) \subset \mathcal{N}(n,m)$ and thus
$\mathcal{F}(n,m)\setminus \mathcal{N}(n,m) \subset \mathcal{F}(n,m)\setminus \mathcal{P}(n,m)$. Hence, the following
result immediately follows from Lemma~\ref{thm:P-dense-and-large}.
\begin{theorem}
  \label{thm:close-to-max-sparsity}
  Any frame in $\mathcal{F}(n,m)$ is arbitrarily close to a frame in $\mathcal{N}(n,m)$. Moreover, the set
  $\mathcal{F}(n,m)\setminus \mathcal{N}(n,m)$ is of measure zero.
\end{theorem}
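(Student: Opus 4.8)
The plan is to obtain both assertions as formal consequences of Lemma~\ref{thm:P-dense-and-large}, together with the inclusion $\mathcal{F}(n,m)\setminus\mathcal{N}(n,m)\subseteq\mathcal{F}(n,m)\setminus\mathcal{P}(n,m)$ recorded just above the statement. First I would spell out why $\mathcal{P}(n,m)$ sits inside $\mathcal{N}(n,m)$ once we restrict to actual frames: since $\Phi^{(j)}$ has only $n-1$ rows, the condition $\spark{\Phi^{(j)}}=n$ is exactly the statement that $\Phi^{(j)}$ is in general position, so for $\Phi\in\mathcal{P}(n,m)\cap\mathcal{F}(n,m)$ Theorem~\ref{thm:lower-bound-sparsity} gives $\norm[0]{\Psi}\ge\sum_{j=1}^n\spark{\Phi^{(j)}}=n^2$ for every dual $\Psi$, while Lemma~\ref{lem:sparse-dual-upper-bound} produces a dual of sparsity $\le n^2$; hence the sparsest dual has exactly $n^2$ nonzero entries, \ie $\Phi\in\mathcal{N}(n,m)$ (this is Theorem~\ref{thm:general-position-has-all-duals-n-squared}). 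Chaining inclusions then yields
\[
\mathcal{F}(n,m)\setminus\mathcal{N}(n,m)\ \subseteq\ \mathcal{F}(n,m)\setminus\mathcal{P}(n,m)\ \subseteq\ \K^{n\times m}\setminus\mathcal{P}(n,m)=\mathcal{P}(n,m)^c .
\]

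For the measure claim I would simply invoke Lemma~\ref{thm:P-dense-and-large}(ii): $\mathcal{P}(n,m)^c$ is a Lebesgue null set in $\K^{n\times m}$, and since $\mathcal{F}(n,m)$ is an open subset of $\K^{n\times m}$ the induced Euclidean measure on it is the restriction of Lebesgue measure; a subset of a null set is null, so $\mathcal{F}(n,m)\setminus\mathcal{N}(n,m)$ has measure zero. For the approximation claim, fix $\Phi\in\mathcal{F}(n,m)$ and $\eps>0$. Being a frame is equivalent to $\rank\Phi=n$, an open condition, so I can choose a Euclidean ball $B$ of radius $\le\eps$ about $\Phi$ with $B\subseteq\mathcal{F}(n,m)$; by density of $\mathcal{P}(n,m)$ (Lemma~\ref{thm:P-dense-and-large}(i)) I may pick $\Phi'\in\mathcal{P}(n,m)\cap B$, and then $\Phi'\in\mathcal{P}(n,m)\cap\mathcal{F}(n,m)\subseteq\mathcal{N}(n,m)$ lies within $\eps$ of $\Phi$. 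This shows every frame in $\mathcal{F}(n,m)$ is arbitrarily close to a frame in $\mathcal{N}(n,m)$.

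I do not expect any genuine obstacle: all the substantive work is already carried out in Theorem~\ref{thm:lower-bound-sparsity} and Lemma~\ref{thm:P-dense-and-large}, and what remains is bookkeeping of set inclusions. The only point that warrants a moment's care — and the sole reason the openness of $\mathcal{F}(n,m)$ enters at all — is ensuring the approximating matrices are themselves frames, so that ``sparsest dual'' is meaningful for them and so that the loose inclusion $\mathcal{P}(n,m)\subseteq\mathcal{N}(n,m)$ is used in the honest form $\mathcal{P}(n,m)\cap\mathcal{F}(n,m)\subseteq\mathcal{N}(n,m)$.
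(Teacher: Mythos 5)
Your proposal is correct and follows the same route as the paper: the inclusion $\mathcal{P}(n,m)\subseteq\mathcal{N}(n,m)$ (via Theorem~\ref{thm:lower-bound-sparsity} and Lemma~\ref{lem:sparse-dual-upper-bound}) combined with Lemma~\ref{thm:P-dense-and-large}, which is exactly how the paper derives the result. Your extra care in using the openness of $\mathcal{F}(n,m)$ to guarantee the approximating matrices are themselves frames is a detail the paper leaves implicit, but it does not change the argument.
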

Another consequence of Lemma~\ref{thm:P-dense-and-large} is that for many randomly generated frames, the sparsest dual has sparsity level $\norm[0]{\Psi} = n^2$. 
As an example, this holds when the entries of $\Phi$ are drawn independently at random from a standard normal distribution. Also frames obtained by a small Gaussian random perturbations of a given frame will have this property.

\subsection{Sparse duals of sparse frames}
\label{sec:duals-spectr-tetr}
We have seen in the previous sections that, while the sparsity of the sparsest dual of a frame can lie between $n$ and $n^2$, generically, its value is $n^2$. One can, however, by imposing structure, obtain classes of frames that allow for considerably sparser duals. One example is a class of very sparse frames, the so-called \emph{spectral tetris frames}.

The spectral tetris algorithm is constructs unit norm frames of $m$ vectors in $\R^m$ with prescribed eigenvalues of the frame
operator. It was developed in \cite{cfmwz11} and extended in \cite{cchkp}. The construction works for any sequence of
eigenvalues $\left(\lambda_j\right)_{j=1}^n$ that satisfy $\lambda_j \ge 2$ for all $j$ and for which $m:=\sum_{j=1}^{n}\lambda_j$ is an integer. 

We will need to introduce some notation that we will use throughout this section. For a sequence $\left(\lambda_j\right)_{j=1}^n$, we define $k_i$ and $\tilde m_i$ as follows. Set $k_0=0$ and $m_0=0$ and recursively choose $k_i=\inf\setpropsmall{\N\ni k > k_{i-1} }{ \tilde m_i:=\sum_{j=1}^{k}\lambda_j \in \N}$. Naturally, the largest value $\mu$ that can be chosen for $i$ is such that $\tilde m_\mu=m$ and $k_\mu=n$.  Furthermore, let $K:=\setprop{k_i}{i=0,1,\dots,\mu}$.
\begin{definition}
\label{dfn:STF}
For a sequence $\left(\lambda_j\right)_{j=1}^n$ satisfying the condition $m:=\sum_{j=1}^n \lambda_j \in \N$, the Spectral Tetris
frame $\STF(n,m,\set{\lambda_j}) \in \R^{n\times m}$ is given by
\[
\begin{bmatrix}
   1\;  \cdots\;  1 & a_1 & a_1 & & & & & & &      \\
   &   b_1 & -b_1 & 1 \;  \cdots\; 1  & a_2 & a_2 & & & & &    \\
   &  & & & b_2 & -b_2 &  1  \;   \cdots   & &   \\
    & & & &  &  &  &   &  & &     \\
   & & & &   &   & &   \cdots \; 1  & a_{n-1} & a_{n-1} &   \\
     \multicolumn{1}{c}{\myunderbrace{\phantom{1 \; \cdots \;
           1}}{m_1 \text{ times}}} &  & &
     \multicolumn{1}{c}{\myunderbrace{\phantom{1 \; \cdots \;
           1}}{m_2 \text{ times}}} & & & &    & b_{n-1} & -b_{n-1} & \multicolumn{1}{c}{\myunderbrace{1 \; \cdots \;
           1}{m_n \text{ times}}} 
\end{bmatrix},\medskip
\]
where, for $j\notin K$, $a_j:=\sqrt{\frac{r_j}{2}}$ and $b_j:=\sqrt{1-\frac{r_j}{2}}$, and $r_j \in \itvco{0}{1}$ and $m_j \in \N_0$ are defined by
\begin{align}
  \lambda_j &= m_j + r_j, && \text{when $j-1\in K$} ,
  \label{eq:m-and-r-in-K}  \\
  \lambda_j &= (2-r_{j-1}) + m_j + r_j, && \text{otherwise}. \label{eq:m-and-r--not-in-K}
\end{align}
If $j\in K$, the $2\times 2$-block matrix $B_j=\left[
  \begin{smallmatrix}
    a_j & a_j \\ b_j & -b_j
  \end{smallmatrix} \right]$ is omitted.
\end{definition}
%

\begin{proposition}
\label{thm:sparsity-STF}
  Suppose $\left(\lambda_j\right)_{j=1}^n$ satisfies $\lambda_j \ge 2$, $j=1,\dots,n$, and $m:=\sum_{j=1}^n\lambda_j
  \in \N$. Let $\Phi:=\STF(n,m,\left(\lambda_j\right)_{j=1}^n)$. Define the set $I,J \subset \set{1,\dots,n}$ by
\begin{equation*}
   \label{eq:def-of-I}
I = \{0\leq j\leq \mu-1: k_{j+1}=k_j+1 \} \qquad \text{and}
\end{equation*}
\begin{equation*}
  \label{eq:def-of-J}
  J = \setprop{j_0 \in \itvoo{k_i+1}{k_{i+1}}}{i = 0,\dots, \mu \text{ and } \sum_{j=k_i+1}^{j_0}\lambda_j -
    \left( \floor{\sum_{j=k_i+1}^{j_0-1}\lambda_j} +2 \right) \ge 1 },
\end{equation*}
and let $\hat k :=2\mu + \card{J}-\card{I}$, where $\mu$ is the maximal index $i$ as above. Then the sparsest dual $\Psi$ satisfies
\[
\norm[0]{\Psi}= \hat k+2(n-\hat k).
\]
\end{proposition}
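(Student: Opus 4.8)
The plan is to invoke Proposition~\ref{thm:sparsity-of-sparsest-dual}, which reduces the claim to the computation $\norm[0]{\Psi}=\sum_{j=1}^n\spark[j]{\Phi}$, and then to read each $\spark[j]{\Phi}$ off the explicit block form of $\Phi=\STF(n,m,(\lambda_j)_{j=1}^n)$ in Definition~\ref{dfn:STF}. The nonzero entries of row $j$ lie in three consecutive groups of columns: the two columns of the block $B_{j-1}$ (present exactly when $j-1\notin K$), then $m_j$ columns equal to the standard basis vector $e_j$, then the two columns of $B_j$ (present exactly when $j\notin K$). Before the case analysis I would record the elementary positivity facts used throughout: for $l\notin K$ the relevant running partial sum is non-integral, so $r_l\in(0,1)$, whence $a_l=\sqrt{r_l/2}>0$ and $b_l=\sqrt{1-r_l/2}>0$; consequently the two columns of any present block $B_l$ are linearly independent in $\Phi$, yet after deleting either of the two rows in their $2\times 2$ support they collapse to two collinear nonzero vectors (proportional to $e_l$, resp.\ to $e_{l+1}$).

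The core step is then that $\spark[j]{\Phi}=1$ when $m_j\ge 1$ and $\spark[j]{\Phi}=2$ when $m_j=0$. The first case is immediate: a column of $\Phi$ equal to $e_j$ is nonzero in $\Phi$ but identically zero in $\Phi^{(j)}$, so a single column already witnesses $\spark[j]{\Phi}=1$. For the second case I would first show $m_j=0$ forces $j-1\notin K$ \emph{and} $j\notin K$: if $j-1\in K$ then \eqref{eq:m-and-r-in-K} gives $m_j=\lambda_j-r_j\ge 1$, and if $j\in K$ then $r_j=0$, so \eqref{eq:m-and-r--not-in-K} gives $\lambda_j=(2-r_{j-1})+m_j$ with $r_{j-1}\in(0,1)$, again forcing $m_j\ge 1$. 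Hence when $m_j=0$ both neighbouring blocks are present; the two columns of $B_{j-1}$ are independent in $\Phi$ but equal in $\Phi^{(j)}$, so $\spark[j]{\Phi}\le 2$, while no column of $\Phi$ is a scalar multiple of $e_j$ (the standalone columns are absent and a present block column has both coordinates nonzero), so $\spark[j]{\Phi}\ge 2$. Summing over $j$ gives $\norm[0]{\Psi}=\card{\setpropsmall{j}{m_j\ge 1}}+2\card{\setpropsmall{j}{m_j=0}}$, and since there are $n$ rows in all, it remains only to prove the combinatorial identity $\card{\setpropsmall{j}{m_j\ge 1}}=\hat k$.

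For this identity, recall $K=\set{0=k_0<\dots<k_\mu=n}$ partitions $\set{1,\dots,n}$ into the $\mu$ blocks $(k_i,k_{i+1}]$; a type-A block ($k_{i+1}=k_i+1$, counted by $I$) is the single index $k_{i+1}$, for which $\lambda_{k_{i+1}}\in\N$ with $\lambda_{k_{i+1}}\ge 2$, hence $m_{k_{i+1}}\ge 1$. Inside a type-B block I would introduce the running sums $\sigma_l=\sum_{t=k_i+1}^{l}\lambda_t$; a short induction using \eqref{eq:m-and-r-in-K}--\eqref{eq:m-and-r--not-in-K} yields $\{\sigma_l\}=r_l$ and, for $l-1\notin K$, the key identity $m_l=\floor{\sigma_l}-\floor{\sigma_{l-1}}-2$. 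This gives $m_l\ge 1\iff\sigma_l\ge\floor{\sigma_{l-1}}+3$, which is precisely the defining condition of $J$, so $J$ is exactly the set of strictly interior block indices with $m_l\ge 1$. Moreover the first index $k_i+1$ of every block has $m\ge 1$ by \eqref{eq:m-and-r-in-K} and $\lambda\ge 2$, and the last index $k_{i+1}$ of a type-B block has $m\ge 1$ by the argument above applied with $j=k_{i+1}\in K$, $j-1\notin K$. Thus each of the $\card{I}$ type-A blocks contributes $1$ and each of the $\mu-\card{I}$ type-B blocks contributes $2+\card{J\cap(k_i+1,k_{i+1})}$ to $\card{\setpropsmall{j}{m_j\ge 1}}$, for a total of $\card{I}+2(\mu-\card{I})+\card{J}=2\mu+\card{J}-\card{I}=\hat k$. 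I expect this bookkeeping --- pinning down the two boundary indices of every block and matching the interior count against $\card{J}$ via $m_l=\floor{\sigma_l}-\floor{\sigma_{l-1}}-2$ --- to be the main obstacle, the spark computation itself being routine once $a_l,b_l>0$ is in hand.
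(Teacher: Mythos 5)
Your proposal is correct and follows essentially the same route as the paper: reduce to $\sum_j\spark[j]{\Phi}$ via Proposition~\ref{thm:sparsity-of-sparsest-dual}, observe that $\spark[j]{\Phi}$ is $1$ or $2$ according to whether a standalone column $e_j$ is present (equivalently $m_j\ge 1$), and then count the rows with $m_j\ge 1$ using the block structure of $K$ and the floor identity relating $m_{j_0}$ to the partial sums of the $\lambda_j$. Your block-by-block bookkeeping and the explicit justification that $m_j=0$ forces both neighbouring blocks $B_{j-1},B_j$ to be present are slightly more detailed than the paper's transitions-plus-interior count, but the argument is the same.
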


\begin{proof}
  Let $\seqsmall{e_j}_{j=1}^n$ denote the standard orthonormal basis of $\R^n$. If $e_j \in \Phi$, then $\spark[j]{\Phi}=1$
  since $\Phi^{(j)}$ contains a zero column. On the other hand, if $e_j \not\in \Phi$, then $\spark[j]{\Phi}=2$. Let
  \begin{equation}
 \tilde k = \card{\setprop{j}{e_j \in \Phi}} .\label{eq:claim-on-k}
  \end{equation}
  By Proposition~\ref{thm:sparsity-of-sparsest-dual}, it then follows directly that the sparsest dual satisfies
  \[
  \norm[0]{\Psi} = \sum_{j=1}^n\spark[j]{\Phi}=\tilde k+2(n-\tilde k).
  \]
  To complete the proof, we need show that $\tilde k$ always equals $ \hat k=2\mu+\card{J}-\card{I}$, i.e.,  that
  there are exactly $2\mu+\card{J}-\card{I}$ rows of $\Phi$ containing a $1$.

  For that, recall that each $k_j$ corresponds to a $2\times 2$-block matrix $B_j=\left[
  \begin{smallmatrix}
    a_j & a_j \\ b_j & -b_j
  \end{smallmatrix} \right]$ being omitted at the transition from one row to the next. Hence the $k_j$th column is $e_ {m_j}$, the $(k_j+1)$st column is $e_{m_j+1}$; each of the $\mu$ values of $k_j$ yields two rows that contain an $e_j$.
if $j \in \bigl(K \cup (K+1) \bigr) \cap \set{1,\dots, n}$, then $e_j \in\Phi$. Correcting for the double counting resulting from the case $k_{j+1}=k_j+1$, there are $2\mu-\card{I}$ values of $\ell$ such that $e_\ell \in \Phi$ resulting from this phenomenon.

In addition, there can be $e_\ell\in \Phi$ that do not correspond to such a transition, i.e., that appear as the $j_0$th column of the frame matrix, where $j_0$ is such that $k_i+1 < j_0 < k_{i+1}$ for some $i$. For such a $j_0$ we calculate
  \begin{align*}
    m_{j_0} + r_{j_0} &= \lambda_{j_0} - 2 + r_{j_0-1} \\
    &= \lambda_{j_0} - 2 + \lambda_{j_0-1} - 2 + r_{j_0-2} - m_{j_0-1}
    \\
    &= (\lambda_{j_0} + \lambda_{j_0-1}) - (4 + m_{j_0-1}) - r_{j_0-2}
    \\
    &= \cdots = \sum_{j=k_i+1}^{j_0}\lambda_{j} - \Bigl(2(j_0-1) + \sum_{j=k_i+1}^{j_0-1}m_{j}\Bigr)
  \end{align*}
  By definition the $j_0$th row contains a $1$ if and only if $m_{j_0}\ge 1$. Furthermore, it can easily be shown, \eg
  by induction, that
  \[
  2(j_0-k_i-1) + \sum_{j=k_i+1}^{j_0-1}m_{j} = \floor{\sum_{j=k_i+1}^{j_0-1}\lambda_j} +2.
  \]
  Therefore, the $j_0$th row of $\Phi$ contains a $1$, if and only if
  \[
  \sum_{j=k_i+1}^{j_0}\lambda_{j} - \left(\floor{\sum_{j=k_i+1}^{j_0-1}\lambda_j} +2\right) \ge 1.
  \]
This is just the defining condition of $J$, so one has an additional number of $\card{J}$ vectors $e_\ell\in\Phi$, which proves the theorem.
\end{proof}

\begin{remark}
  Since $\mu \ge 1$, we see that $\hat k\ge 2$. Thus the sparsest dual of a Spectral Tetris frame satisfies $n \le \norm[0]{\Psi}=2n-\hat k\le
  2n-2$. Furthermore, if $\lambda_j \ge 3$ for all $j=1,\dots, n$, we see that $\norm[0]{\Psi}=n$ since $\hat k=n$.
\end{remark}

\begin{remark}
  Recall that the spectral tetris algorithm is only guaranteed to work
  for redundancies larger than two, see Definition~\ref{dfn:STF}. In
  \cite{lemvig_prime} the proof of Proposition~\ref{thm:sparsity-STF}
  is used to characterize when the spectral tetris construction works
  for tight frames with redundancies below two. In fact, a similar argument works for general frames without a tightness condition.
\end{remark}
\begin{remark}
 A construction scheme for optimal sparse duals of Spectral Tetris frames directly follows from the proof of Proposition~\ref{thm:sparsity-STF}. The resulting dual will consist only of $1$-sparse vectors and $2$-sparse vectors.
\end{remark}
%

\section{Spectral properties of duals}\label{sec:spectrum-dual}

In this section we answer the question of which possible spectra the set of all dual frames admits. A special case of this question has already been answered in \cite{massey_optimal-dual-frames}, namely the question, which frames admit tight duals. In linear algebra terms this boils down to the question whether a full-rank matrix $\Phi$ has a right inverse with condition number one. 
The first step towards a more general characterization will be a more quantitative version of their result. In particular,  we study which frame bounds tight duals can attain for a given frame. Furthermore, our proof provides an explicit construction procedure for these duals. The understanding developed in the proof will turn out useful in some more general cases.

It turns out that a frame always has a tight dual if the redundancy is two or larger. If the
redundancy is less than two, it will only be possible under certain
assumptions on the singular values of $\Phi$. 
Note that in
\cite{massey_optimal-dual-frames}, the result is formulated in terms of the eigenvalues of the frame operator, so even the existence part of the following result is formulated slightly different from the
result in \cite{massey_optimal-dual-frames}.
\begin{theorem}
\label{thm:tight-dual}
  Let $n,m \in \N$. Suppose $\Phi$ is a frame for $\K^n$ with $m$ frame vectors and lower frame bound $A$. Then the following assertions hold:
  \begin{compactenum}[(i)]
  \item If $m \geq 2n$, then for every $c\geq \frac{1}{\sqrt A}$, there exists a tight dual frame $\Psi$ with frame bound $c$.
  \item If $m = 2n-1$, then there exists a tight dual frame $\Psi$; the only possible frame bound is $\frac{1}{\sqrt A}$.
  \item Suppose $m < 2n-1$. Then there exists a tight dual frame $\Psi$ if and only if the smallest $2n-m \in \set{2,\dots,n}$ singular values of $\Phi$ are equal. 
  \end{compactenum}
\end{theorem}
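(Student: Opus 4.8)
The plan is to reduce all three parts to a single elementary fact about the free block in the singular-value parametrization \eqref{eq:svd-dual-parametric}. Write $M_\Psi = [\,D \mid S\,]$ with $D = \diag{\sigma_1^{-1},\dots,\sigma_n^{-1}} \in \R^{n\times n}$ and $S = [s_{i,k}] \in \K^{n\times r}$ ($r = m-n$) a free matrix, so that every dual has the form $\Psi = U M_\Psi V^\ast$. Since $U$ and $V$ are unitary, $\Psi$ and $M_\Psi$ share singular values, so $\Psi$ is a tight dual with frame bound $c$ --- meaning $\Psi\Psi^\ast = c^2 I_n$, i.e. all singular values of $\Psi$ equal $c$ --- exactly when $M_\Psi M_\Psi^\ast = c^2 I_n$. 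Because $M_\Psi M_\Psi^\ast = D^2 + SS^\ast$, this is the one matrix equation
\[
SS^\ast \;=\; c^2 I_n - D^2 \;=\; \diag{c^2-\sigma_1^{-2},\ \dots,\ c^2-\sigma_n^{-2}} \;=:\; \Lambda_c .
\]
So the theorem is precisely the question: for which $c$ is $\Lambda_c$ of the form $SS^\ast$ with $S$ having $r$ columns?

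The key lemma I would record is: a diagonal matrix $\Lambda = \diag{t_1,\dots,t_n}$ with all $t_j \ge 0$ equals $SS^\ast$ for some $S \in \K^{n\times r}$ if and only if at most $r$ of the $t_j$ are nonzero. Necessity is just $\rank(SS^\ast) \le r$; sufficiency is the explicit choice of $S$ whose columns are the (at most $r$) nonzero vectors $\sqrt{t_j}\,e_j$, padded with zero columns --- and this doubles as the promised explicit construction of tight duals via $\Psi = U\,[\,D\mid S\,]\,V^\ast$. Applied to $\Lambda_c$ (and noting that $\Psi\Psi^\ast = c^2 I_n \succ 0$ makes such a $\Psi$ automatically a frame), a tight dual with frame bound $c$ exists if and only if (a) $\Lambda_c \succeq 0$, i.e. $c^2 \ge \max_j \sigma_j^{-2} = \sigma_n^{-2} = 1/A$, equivalently $c \ge 1/\sqrt A$; and (b) at most $r = m-n$ of the numbers $c^2-\sigma_j^{-2}$ are nonzero, i.e. at least $2n-m$ of them vanish.

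The three cases are then bookkeeping. If $m \ge 2n$, then $2n-m \le 0$, so (b) is vacuous and (a) alone yields (i): every $c \ge 1/\sqrt A$ is attained, with $S = [\,\sqrt{c^2-\sigma_1^{-2}}\,e_1 \mid \dots \mid \sqrt{c^2-\sigma_n^{-2}}\,e_n \mid 0 \mid \dots \mid 0\,]$, which fits since $r \ge n$. If $m = 2n-1$, then (b) forces some $\sigma_j^{-2} = c^2$ while (a) forces $c^2 \ge \sigma_n^{-2}$; as $\sigma_n^{-2}$ is the largest of the $\sigma_j^{-2}$, these force $c^2 = \sigma_n^{-2}$, so $c = 1/\sqrt A$ is the unique possible bound, and it is attained (the $j=n$ entry of $\Lambda_c$ vanishes); this is (ii). If $m < 2n-1$, put $p := 2n-m \ge 2$; then (b) needs $\#\{j : \sigma_j^{-2} = c^2\} \ge p$ and (a) needs $c^2 \ge \sigma_n^{-2}$, and together $\sigma_j^{-2} = c^2$ implies $\sigma_j^{-2} \ge \sigma_n^{-2} \ge \sigma_j^{-2}$, hence $\sigma_j = \sigma_n = 1/c$. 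So existence is equivalent to the value $\sigma_n = \sqrt A$ occurring with multiplicity $\ge p$ among $\sigma_1 \ge \dots \ge \sigma_n$, i.e. (the sequence being non-increasing) to $\sigma_{m-n+1} = \dots = \sigma_n$ --- the smallest $2n-m$ singular values of $\Phi$ coincide. Conversely, given that, $c = 1/\sqrt A$ works, since then $\Lambda_c$ is nonzero only at the indices $j$ with $\sigma_j > \sqrt A$, of which there are at most $n - p = m-n = r$, so the construction applies. This proves (iii).

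I do not expect a real obstacle: once tightness is rewritten as $SS^\ast = \Lambda_c$, everything is forced. The only delicate point is the coupling, in (ii) and (iii), between positive semidefiniteness of $\Lambda_c$ (which, the moment a rank drop is required, pins $c$ to $1/\sqrt A$) and the rank bound $\rank(SS^\ast) \le m-n$ --- together with being careful that the threshold is phrased in terms of the common singular value of the tight dual, hence $1/\sqrt A$, rather than the eigenvalue $1/A$ of its frame operator.
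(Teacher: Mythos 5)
Your proof is correct and follows essentially the same route as the paper: both work in the SVD parametrization $\Psi = U\,[\,D \mid S\,]\,V^\ast$ and reduce tightness to the requirement that the rows $s_i$ of the free block be pairwise orthogonal with prescribed norms $\norm{s_i}^2 = c^2 - \sigma_i^{-2}$ --- which is exactly your equation $SS^\ast = \Lambda_c$ --- and then observe that at most $r=m-n$ of these rows can be nonzero. Your rank-based phrasing of that last count, and your explicit care in reading ``frame bound $c$'' as the common singular value (threshold $1/\sqrt{A}$) rather than the frame-operator eigenvalue $1/A$ (a point on which the paper's own proof is slightly loose), are minor packaging differences only.
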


\begin{proof}
Let $\Phi=U\Sigma_\Phi V^\ast$ be a full SVD of $\Phi$, and let $\Psi$ be an arbitrary
  dual frame. Following Section~\ref{sec:dual-frames}, we factor the dual frame as $\Psi=U M_\Psi V^\ast$, where
  $M_\Psi$ is given as in \eqref{eq:svd-dual-parametric} with $s_{i,k}\in \K$ for $i=1,\dots,n$ and
  $k=1,\dots,r=m-n$. For $\Psi$ to be tight, we need to choose $s_{i,k}$ such that the rows of $M_\Psi$ are
  orthogonal and have equal norm. This follows from the fact that $\Psi$ is row orthogonal if and only if $M_\Psi$
  is row orthogonal.

  As the diagonal block of $M_\Psi$ is well-understood, the duality and tightness constraints translate to conditions for the inner products of the $s_i=(s_{i,1},\dots,s_{i,r})\in \K^r$, $i=1,\dots,n$. 
 Indeed, $\Psi$ is a tight dual frame with frame bound $c$ if and only if, for all $1\leq i\leq n$, one has
 \begin{equation}
 c= \frac{1}{\sigma_i^2}+\norm[2]{s_i}^2, \label{eq:row-norm-cond}
\end{equation}
and, for all $i\neq j=1,\dots,n$, one has 
$\innerprods{s_i}{s_j}=0$.

Now assume that $\sigma_n=\sigma_{n-1}=\dots=\sigma_{p+1} < \sigma_p$ for some $p <n$. The case of distinct singular values corresponds to $p=n-1$.
As $\sigma_{p+1} <
  \sigma_{i}$ for all $1\leq i\leq p$, \eqref{eq:row-norm-cond} implies that all $s_i$ for $i=1,\dots,p$ must be nonzero
vectors even if $s_{p+1}, \dots,s_n$ are all zero. Furthermore, \eqref{eq:row-norm-cond} also determines the norms of $s_1,\dots,s_p$ as a function of $\normsmall{s_{p+1}}=\dots =\norm{s_n}$. The second condition implies that if $s_n\neq 0$, the sequence $\seqsmall{s_j}_{j=1}^n$ is orthogonal, else the sequence $\seqsmall{s_i}_{i=1}^p$.

If $r\geq n$, that is, if $m\geq 2n$, then any choice of $s_n$ allows for an orthogonal system with compatible norms, so tight dual frames with any frame bound above $\frac{1}{\sigma_n}$ exist and can be efficiently constructed. If $r<n$, then no $n$ vectors can form an orthogonal system, one needs to have $s_n=0$ and hence also $s_j=0$ for all $j>p$. So no frame bound other than $\frac{1}{\sigma_n}$ is possible. The remaining vectors $\setsmall{s_j}_{j=1}^p$ are all non-zero, so they must form an orthogonal system. For $r\ge n-p+1$, this is possible, and again a solution satisfying the norm constraints can be efficiently constructed. For $r\leq n-p$, no such system exists, hence there cannot be a tight dual.

 This completes the proof.
\end{proof}

 We remark that for tight frames $\Phi$, hence $p=0$, one has the following two cases: If $m< 2n$, then $\Phi$ has only $A^{-1}\Phi$ as tight dual frame. If $m\ge 2n$, then $\Phi$ has infinitely many tight dual frames, in particular, $\Phi$ has a $C$-tight dual frame for any $C \ge A^{-1}$.

We will now derive general conditions on which spectral patterns (now possibly consisting of more than one point) can be achieved by a dual frame of a given frame. The reason that, in the general framework, such an analysis is harder than in the context of tight duals is that in that case, the frame operator is a multiple of the identity, hence diagonal in any basis. This no longer holds true if we drop the tightness assumption, so when the orthogonality argument of Theorem~\ref{thm:tight-dual} fails, one cannot conclude that there is no dual with a given spectral pattern. However, the orthogonality approach allows to choose a subset of the singular values of the dual frame freely. In particular, 
if the redundancy of the frame $\Phi$ is larger than $2$, it follows that for all spectral patterns satisfying a set of lower bounds, which we will later show to be necessary (see Theorem~\ref{thm:interlacing-sing-values}), a dual with that spectrum can be found using a constructive procedure analogous to the proof of Theorem~\ref{thm:tight-dual}.
\begin{theorem}
\label{thm:possible-sing-values-of-dual}
Let $n,m \in \N$, and let $\Phi$ be a frame for $\K^n$ with $m$ frame vectors and singular values
$\seqsmall{\sigma_i}_{i=1}^n$.  Suppose that $r \le m-n$ and that $I_r \subset \{1,\dots,n\}$ with $\card{I_r}=r$. Then, for any sequence $\seqsmall{q_i}_{i\in I_r}$
satisfying $q_i\ge  1/\sigma_i$ for all $i \in I_r$, there exists a dual frame $\Psi$ of $\Phi$
such that  $\setsmall{q_i}_{i\in I_r}$ is contained in the spectrum of $\Psi$. Furthermore, it can be found constructively using a sequence of orthogonalization procedures.
\end{theorem}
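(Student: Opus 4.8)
The strategy is to mimic the proof of Theorem~\ref{thm:tight-dual}, working entirely in the SVD parametrization of the dual frames developed in Section~\ref{sec:dual-frames}. Write $\Phi = U\Sigma_\Phi V^\ast$ and look for $\Psi = U M_\Psi V^\ast$ with $M_\Psi$ of the form \eqref{eq:svd-dual-parametric}, so that duality is automatic and the free parameters are the row vectors $s_i = (s_{i,1},\dots,s_{i,r}) \in \K^r$, $i=1,\dots,n$ (where here $r$ is permitted to be any value $\le m-n$; simply set the extra columns of the parameter block to zero). Since the singular values of $\Psi$ are the singular values of $M_\Psi$, the goal becomes: choose the $s_i$ so that the rows of $M_\Psi$ are mutually orthogonal, and so that for each $i \in I_r$ the corresponding row has norm exactly $q_i$, i.e.\ $q_i^2 = 1/\sigma_i^2 + \norm[2]{s_i}^2$. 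Because $q_i \ge 1/\sigma_i$, this last equation is solvable for $\norm[2]{s_i}$, namely $\norm[2]{s_i}^2 = q_i^2 - 1/\sigma_i^2 \ge 0$.

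The key point is that we only need to control $r = \card{I_r}$ of the singular values, and we have an $r$-dimensional parameter space $\K^r$ for each $s_i$. So I would proceed index by index over $I_r$: process the elements of $I_r$ one at a time, at each stage choosing the next $s_i$ to be a vector of the prescribed norm $\sqrt{q_i^2 - 1/\sigma_i^2}$ lying in the orthogonal complement (inside $\K^r$) of the span of the previously chosen $s$-vectors. For the indices $i \notin I_r$, set $s_i = 0$; a zero vector is trivially orthogonal to everything, so orthogonality of the whole row system $\seqsmall{s_i}_{i=1}^n$ reduces to orthogonality of $\seqsmall{s_i}_{i\in I_r}$ together with the already-diagonal structure of the left block. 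Since at the $\ell$-th step ($\ell \le r$) we have used up at most $\ell - 1 < r$ dimensions, the orthogonal complement in $\K^r$ is nonzero and contains a vector of any prescribed length, so the construction never gets stuck. This is precisely the ``sequence of orthogonalization procedures'' referred to in the statement, and it is manifestly constructive (e.g.\ Gram--Schmidt followed by a rescaling at each step).

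Finally I would verify that the resulting $\Psi$ has the claimed property: by construction the rows of $M_\Psi$ are pairwise orthogonal, the row indexed by $i \in I_r$ has squared norm $1/\sigma_i^2 + (q_i^2 - 1/\sigma_i^2) = q_i^2$, and hence $M_\Psi M_\Psi^\ast$ is diagonal with $q_i^2$ among its diagonal entries for $i \in I_r$; therefore the singular values of $M_\Psi$, and so of $\Psi = U M_\Psi V^\ast$, include the values $q_i$, $i \in I_r$. The constraint $\Sigma_\Phi M_\Psi^\ast = I_n$ holds because $M_\Psi$ has the form \eqref{eq:svd-dual-parametric}, so $\Psi$ is indeed a dual frame of $\Phi$. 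I do not expect a serious obstacle here; the only thing to be careful about is the bookkeeping that orthogonality of the $s_i$ suffices for row-orthogonality of the full matrix $M_\Psi$ (the diagonal $1/\sigma_i$ block contributes $\delta_{ij}/\sigma_i^2$ to $\innerprods{\text{row } i}{\text{row } j}$, which vanishes for $i \ne j$), and that the dimension count $\ell - 1 < r$ genuinely guarantees room at every step — both of which are routine. The mild subtlety worth flagging is simply that we make no claim about the remaining $n - r$ singular values of $\Psi$, only that the prescribed $r$ values appear in the spectrum, which is exactly what the theorem asserts.
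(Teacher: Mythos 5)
Your proposal is correct and follows essentially the same route as the paper: parametrize the duals via the SVD as in \eqref{eq:svd-dual-parametric}, set $s_i=0$ for $i\notin I_r$, choose $\seqsmall{s_i}_{i\in I_r}$ mutually orthogonal with $\normsmall[2]{s_i}^2=q_i^2-1/\sigma_i^2$, and read off the singular values from the resulting row-orthogonal $M_\Psi$. The paper states this more tersely (as "a slight modification of the proof of Theorem~\ref{thm:tight-dual}"); your explicit dimension count and the verification that the diagonal block does not disturb row orthogonality are exactly the routine details it leaves implicit.
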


\begin{proof}
The proof is just a slight modification of the proof of Theorem~\ref{thm:tight-dual}. Again, we choose $(s_i)_{i \in I_r}$ to be orthogonal and the remaining $s_i$'s to be the zero vector. The non-zero $s_i$ vectors are scaled to satisfy 
  \begin{equation*}
    q_i^2= \frac{1}{\sigma_i^2}+\norm[2]{s_i}^2,
  \end{equation*}
where $i \in I_r$. Hence, by this procedure we obtain a dual frame with spectrum $\setsmall{q_i}_{i \in I_r} \cup \setsmall{\sigma^{-1}_i}_{i \notin I_r}$.
\end{proof}

As a corollary we obtain that using the same simple constructive procedure, one can find dual frames with any frame bound that is possible.
\begin{corollary}
  Let $\Phi$ be a redundant frame for $\K^n$ with singular values $\seqsmall{\sigma_i}_{i=1}^n$. Fix an upper frame bound satisfying $B^\Psi\ge \frac{1}{\sigma_{n}^2}$ and a lower frame bound 
$\frac{1}{\sigma_{m-n+1}^2} \ge A^\Psi \ge  \frac{1}{\sigma_{1}^2}$,
where we use the convention $\frac{1}{\sigma_{m-n+1}}=\infty$ if $m \ge 2n$. Then a dual frame $\Psi$ of $\Phi$ with these frame bounds can be found constructively using a sequence of orthogonalization procedures.
\end{corollary}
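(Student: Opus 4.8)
The plan is to derive this corollary directly from Theorem~\ref{thm:possible-sing-values-of-dual} by choosing the index set $I_r$ and the prescribed singular values $\seqsmall{q_i}_{i\in I_r}$ appropriately. The key observation is that the singular values of the canonical dual are $\sigma_n^{-1}\ge\sigma_{n-1}^{-1}\ge\dots\ge\sigma_1^{-1}$, i.e. the largest one is $\sigma_n^{-1}$ and the smallest is $\sigma_1^{-1}$. Theorem~\ref{thm:possible-sing-values-of-dual} lets us \emph{raise} up to $r=m-n$ of these values to any larger targets while leaving the others fixed. So to realize the frame bounds $B^\Psi$ and $A^\Psi$, I want the maximum singular value of $\Psi$ to equal $\sqrt{B^\Psi}$ and the minimum to equal $\sqrt{A^\Psi}$.

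First I would treat the upper bound: to make $\sqrt{B^\Psi}$ (with $B^\Psi\ge\sigma_n^{-2}$, so $\sqrt{B^\Psi}\ge\sigma_n^{-1}$) the largest singular value, I raise the value associated to index $n$ (currently $\sigma_n^{-1}$) up to $\sqrt{B^\Psi}$, which is admissible since $\sqrt{B^\Psi}\ge 1/\sigma_n$. Since every unchanged value $\sigma_i^{-1}\le\sigma_n^{-1}\le\sqrt{B^\Psi}$ and every raised value is $\le\sqrt{B^\Psi}$ too (I will only raise values up to $\sqrt{B^\Psi}$ or up to the lower-bound target, whichever is relevant), the maximum is exactly $\sqrt{B^\Psi}$. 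Next, the lower bound: the smallest singular value of the canonical dual is $\sigma_1^{-1}$; if we want it to be the smaller value $\sqrt{A^\Psi}$ with $\sqrt{A^\Psi}\le\sigma_1^{-1}$ we cannot \emph{lower} anything, so instead we must raise \emph{all} the small values above $\sqrt{A^\Psi}$ except we keep one at a level $\le\sqrt{A^\Psi}\cdot(\text{something})$ — wait, more carefully: we need the minimum over all $n$ singular values of $\Psi$ to be $\ge\sqrt{A^\Psi}$, and the hypothesis $A^\Psi\ge\sigma_1^{-2}$ forces $\sqrt{A^\Psi}\le\sigma_1^{-1}\le\sigma_i^{-1}$ for all $i$, so in fact every unchanged value already dominates $\sqrt{A^\Psi}$; there is no constraint to enforce from below except that the smallest achieved value must not exceed $\sqrt{A^\Psi}$ — but since we only ever raise values, and the canonical values $\sigma_i^{-1}$ are all $\ge\sqrt{A^\Psi}$... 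I need to reexamine which direction the lower frame bound constrains.

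Here is the corrected logic I would write out. For a frame with singular values $\tau_1\ge\dots\ge\tau_n$, the optimal (largest) lower bound is $\tau_n^2$ and the optimal (smallest) upper bound is $\tau_1^2$; a frame with smallest singular value $\tau_n$ and largest $\tau_1$ is a frame \emph{with} lower bound $A^\Psi$ for any $A^\Psi\le\tau_n^2$ and upper bound $B^\Psi$ for any $B^\Psi\ge\tau_1^2$. So I want $\Psi$ whose singular values $\tau_1\ge\dots\ge\tau_n$ satisfy $\tau_1^2\le B^\Psi$ and $\tau_n^2\ge A^\Psi$. Starting from the canonical values $\sigma_n^{-1}\ge\dots\ge\sigma_1^{-1}$ (so $\tau_j=\sigma_{n+1-j}^{-1}$), the smallest is $\sigma_1^{-1}$ and the largest is $\sigma_n^{-1}$. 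The condition $A^\Psi\ge\sigma_1^{-2}$ in the hypothesis means $\sqrt{A^\Psi}\ge\sigma_1^{-1}$, so the canonical dual's smallest singular value is \emph{too small}; we must push it up. I would raise the value at index $1$ from $\sigma_1^{-1}$ to exactly $\sqrt{A^\Psi}$ — admissible since $\sqrt{A^\Psi}\ge 1/\sigma_1$. If $\sqrt{A^\Psi}>\sigma_2^{-1}$ then the value at index $2$ becomes the new smallest, so I raise it too, to $\max(\sqrt{A^\Psi},\,1/\sigma_2)=\sqrt{A^\Psi}$, and so on: I raise indices $1,2,\dots,k$ where $k$ is the largest index with $\sigma_k^{-1}<\sqrt{A^\Psi}$, each up to $\sqrt{A^\Psi}$. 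Simultaneously I raise index $n$ up to $\sqrt{B^\Psi}$. The hypothesis $A^\Psi\le\sigma_{m-n+1}^{-2}$ (with the convention $\sigma_{m-n+1}^{-1}=\infty$ when $m\ge 2n$) is precisely what guarantees $k\le m-n-1$ when $m<2n$, so that together with the single raised index $n$ the total number of raised indices is at most $m-n=r_{\max}$, and Theorem~\ref{thm:possible-sing-values-of-dual} applies with $I_r$ equal to this set; when $m\ge 2n$ there is no such restriction. After these raises, every singular value of $\Psi$ lies in $[\sqrt{A^\Psi},\,\sqrt{B^\Psi}]$ (unchanged ones because $\sigma_1^{-1}\le\sigma_i^{-1}\le\sigma_n^{-1}$ and $\sqrt{A^\Psi}\le\sigma_i^{-1}\le\sqrt{B^\Psi}$ for the middle indices after the raises, raised ones by construction), with $\sqrt{A^\Psi}$ and $\sqrt{B^\Psi}$ both attained, so $\Psi$ has exactly frame bounds $A^\Psi$ and $B^\Psi$, and the construction is the orthogonalization procedure from Theorem~\ref{thm:possible-sing-values-of-dual}.

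The main obstacle, and the step worth stating carefully, is the bookkeeping that the number of indices one is forced to raise — the $k$ values pushed up to meet the lower bound, plus the one pushed up to meet the upper bound — never exceeds $r_{\max}=m-n$; this is exactly where the hypothesis $A^\Psi\le 1/\sigma_{m-n+1}^2$ enters, and I would phrase it as: $A^\Psi\le\sigma_{m-n+1}^{-2}$ forces $\sqrt{A^\Psi}\le\sigma_{m-n+1}^{-1}$, i.e. $\sigma_{m-n+1}^{-1}\ge\sqrt{A^\Psi}$, so none of the indices $m-n+1,\dots,n$ need to be raised for the lower bound, leaving at most $m-n-1$ raised for the lower bound plus index $n$ for the upper bound — a total of at most $m-n$. (One must also check the degenerate overlaps, e.g. if $n$ itself would need raising for the lower bound too, or if $B^\Psi=A^\Psi$; these are handled by raising each relevant index to the maximum of its two target demands, still at most $m-n$ indices total.) Everything else is immediate from Theorem~\ref{thm:possible-sing-values-of-dual}.
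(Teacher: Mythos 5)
Your overall strategy---instantiating Theorem~\ref{thm:possible-sing-values-of-dual} with a suitable index set $I_r$ and targets $q_i$---is exactly the route the paper intends (the corollary is stated there without proof as an immediate consequence of that theorem). However, your key counting claim is wrong. You assert that the hypothesis $A^\Psi\le\sigma_{m-n+1}^{-2}$ forces $k\le m-n-1$, where $k$ is the number of indices $i$ with $\sigma_i^{-1}<\sqrt{A^\Psi}$. It only forces $k\le m-n$: the hypothesis confines the deficient indices to $\set{1,\dots,m-n}$, a set of cardinality $m-n$, not $m-n-1$. Hence your plan of raising $k$ indices for the lower bound \emph{plus} index $n$ for the upper bound can require $m-n+1$ raised indices, exceeding the budget $r\le m-n$ of Theorem~\ref{thm:possible-sing-values-of-dual}. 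Concretely, take $n=2$, $m=3$, $\sigma_1=2$, $\sigma_2=1$, $A^\Psi=1=\sigma_{m-n+1}^{-2}$ and any $B^\Psi>1$: then $k=1=m-n$, and your construction asks for two raised indices where only one is available.

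The gap closes once you notice that you never need to touch index $n$. The corollary asks for a dual for which $A^\Psi$ and $B^\Psi$ are \emph{valid} frame bounds in the sense of Definition~\ref{def:framedef}, i.e.\ every singular value of $\Psi$ lies in $[\sqrt{A^\Psi},\sqrt{B^\Psi}]$; it does not require these bounds to be attained. So simply raise the indices $i$ with $\sigma_i^{-1}<\sqrt{A^\Psi}$ up to $q_i=\sqrt{A^\Psi}$ and leave everything else alone. By the argument you already gave, these indices lie in $\set{1,\dots,m-n}$ (and for $m\ge 2n$ there is no restriction), so Theorem~\ref{thm:possible-sing-values-of-dual} applies, and the resulting spectrum is contained in $[\sqrt{A^\Psi},\,\max\set{\sigma_n^{-1},\sqrt{A^\Psi}}]\subseteq[\sqrt{A^\Psi},\sqrt{B^\Psi}]$, using $B^\Psi\ge\sigma_n^{-2}$ and $A^\Psi\le B^\Psi$. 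If you instead insist on $B^\Psi$ being the \emph{optimal} upper bound, you must fold the target $\sqrt{B^\Psi}$ into one of the indices you are already raising rather than spend an extra index on it; and in the extreme case $m=n+1$ with $\sigma_1^{-2}<A^\Psi$ and $B^\Psi>\sigma_n^{-2}$ the orthogonalization procedure cannot realize both optimal bounds at all, and one has to fall back on the non-constructive Theorem~\ref{thm:interlacing-sing-values}. This is further evidence that the weaker reading, under which the simple one-sided raising argument suffices, is the intended one.
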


To obtain a complete characterization on the possible spectra for dual frames, we will need the following two results by
Thompson \cite{Th72} on the interlacing properties of the spectrum of submatrices.  To simplify notation in the following proof, we use a simpler formulation than in \cite{Th72}, which is technically speaking a special case of the setup given in \cite{Th72}, but up to permutation captures the results in complete generality.

For our setup, let $A\in\K^{n\times m}$ with singular values $\alpha_1\geq \dots \alpha_{\min(n,m)}$,  where $m,n\in\N$.
Furthermore, for $q,\ell \in\N$, $\ell \ge q$, let $P_q:\K^\ell\rightarrow\K^q$ denote the restriction on the $q$ first entries. Of course, the $P_q$ depends not only on $q$, but also on the input dimension $\ell$, but as this can usually be inferred from the context, we suppress this dependence to simplify notation. Note that $P^*_q P_q$ is the projection onto $\Span\setpropsmall{e_i}{i=1,\dots,q}$, \ie
\[
(P^*_q P_q
x)_i=\begin{cases}
  x_i \quad\text{if }i\leq q, \\
  0\quad\text{ else,}  \end{cases}\]
and $P_q P_q^*$ is the identity on $\K^q$. Furthermore, we take $p,q \in \N$ such that $p \le n$ and $q \le m$.
\begin{theorem}[{\cite[Theorem 1]{Th72}}]
\label{thm:interlacing1}
 Suppose $ \beta_1\geq \dots \geq \beta_{\min(p,q)}$ are the singular values of $B \in \K^{p\times q}$ given by $B= P_p A P^*_q$. Then
 \begin{align}
  &\alpha_i \geq \beta_i &\text{for \quad}&1\leq i\leq \min\set{p,q}, \label{eq:interlacing1} \\
  &\beta_i \geq \alpha_{i+(m-p)+(n-q)} &\text{for \quad }&i\leq \min\set{p+q-m,p+q-n}. \label{eq:interlacing2}
 \end{align}
\end{theorem}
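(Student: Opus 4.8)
The plan is to deduce the theorem from Cauchy's interlacing theorem for Hermitian matrices under rank-one positive semidefinite perturbations, applied to the Gram matrices $AA^\ast$ and $A^\ast A$. The starting point is the observation that $B = P_p A P_q^\ast$ is obtained from $A$ in two stages: first one discards the last $m-q$ columns to form $A' := A P_q^\ast \in \K^{n\times q}$, and then one discards the last $n-p$ rows to form $B = P_p A' \in \K^{p\times q}$. Since singular values are invariant under row and column permutations, restricting to prefixes costs no generality.

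First I would prove the one-step principle. If $C = [\hat A \mid c]$ is a matrix with one column split off, then $CC^\ast = \hat A\hat A^\ast + cc^\ast$, so $CC^\ast \succeq \hat A \hat A^\ast$ differ by a rank-one positive semidefinite matrix; Cauchy interlacing for the eigenvalues of these Hermitian matrices (using the convention that singular values past the rank vanish) yields $\sigma_k(C) \ge \sigma_k(\hat A) \ge \sigma_{k+1}(C)$. Splitting off one row instead gives the same conclusion via the identity $C^\ast C = \hat C^\ast \hat C + cc^\ast$ applied to the right Gram matrices. Iterating the column step $m-q$ times gives $\sigma_k(A) \ge \sigma_k(A') \ge \sigma_{k+(m-q)}(A)$, and iterating the row step $n-p$ times gives $\sigma_k(A') \ge \sigma_k(B) \ge \sigma_{k+(n-p)}(A')$. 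Chaining the upper bounds produces $\alpha_k = \sigma_k(A) \ge \sigma_k(A') \ge \sigma_k(B) = \beta_k$, which is \eqref{eq:interlacing1}; chaining the lower bounds produces $\beta_k = \sigma_k(B) \ge \sigma_{k+(n-p)}(A') \ge \sigma_{k+(n-p)+(m-q)}(A) = \alpha_{k+(m-p)+(n-q)}$, which is \eqref{eq:interlacing2}.

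The one genuinely fiddly point, and the step I expect to require the most care, is the index bookkeeping: one must verify that the stated ranges of $i$ are exactly those for which every singular value appearing in the chain refers to an existing index of the relevant matrix, and that the accumulated index shifts sum to $(m-p)+(n-q)$. For \eqref{eq:interlacing1} this needs only $k \le \min(p,q)$, which holds automatically because $\min(p,q) \le \min(n,q) \le \min(n,m)$. For \eqref{eq:interlacing2} one needs $k + (n-p) \le \min(n,q)$ (so that $\sigma_{k+(n-p)}(A')$ is meaningful) and $k+(n-p)+(m-q) \le \min(n,m)$; solving these two inequalities gives precisely $k \le \min\{p+q-m,\,p+q-n\}$, matching the hypothesis.

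A fully self-contained alternative, should one wish to avoid invoking Cauchy interlacing, is to argue straight from the Courant--Fischer characterization $\sigma_k(A) = \max_{\dim V = k}\min_{0 \ne x \in V}\|Ax\|/\|x\|$: the upper bounds \eqref{eq:interlacing1} follow because $P_q^\ast$ is an isometry and $\|P_p\| \le 1$, while the lower bounds \eqref{eq:interlacing2} follow from the dimension count $\dim\bigl(V \cap \Span\{e_1,\dots,e_q\}\bigr) \ge \dim V - (m-q)$ together with the analogous loss of at most $n-p$ dimensions incurred by composing with $P_p$ after applying $A$. This variant reproduces the same constraints on $i$.
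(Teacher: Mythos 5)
The paper does not prove this statement at all: Theorem~\ref{thm:interlacing1} is quoted verbatim (up to the permutation-free reformulation) from Thompson's paper \cite{Th72}, so there is no in-paper argument to compare yours against. Judged on its own, your proof is correct and is essentially the standard derivation of the singular-value interlacing inequalities. The one-step principle is sound: writing $CC^\ast = \hat A\hat A^\ast + cc^\ast$ exhibits the two Gram matrices as a rank-one positive semidefinite perturbation of one another, and Weyl's inequalities give $\lambda_k(CC^\ast)\ge\lambda_k(\hat A\hat A^\ast)\ge\lambda_{k+1}(CC^\ast)$, hence $\sigma_k(C)\ge\sigma_k(\hat A)\ge\sigma_{k+1}(C)$; the row case is symmetric via $C^\ast C$. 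Iterating and chaining produces exactly \eqref{eq:interlacing1} and \eqref{eq:interlacing2}, and your index bookkeeping is right: the total shift $(n-p)+(m-q)$ equals the exponent $(m-p)+(n-q)$ in the statement, and the binding constraint $i+(n-p)+(m-q)\le\min(n,m)$ is precisely $i\le\min\set{p+q-m,p+q-n}$, with the intermediate constraint $i+(n-p)\le\min(n,q)$ implied by it. Two minor points of polish: the phrase ``singular values past the rank vanish'' should really be ``past $\min$ of the dimensions, pad with zeros'' (a matrix can have zero singular values within its dimension count, and the padding is what makes the $n$ eigenvalues of both Gram matrices comparable); and in the Courant--Fischer alternative, the ``loss of at most $n-p$ dimensions after applying $A$'' step deserves one more sentence --- one imposes the $n-p$ linear conditions $(AP_q^\ast u)_j=0$ for $j>p$ on the candidate subspace so that $P_p$ acts isometrically on its image, which is what lets you pass from $\norm{AP_q^\ast u}$ to $\norm{P_pAP_q^\ast u}$ without loss. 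Neither issue affects the validity of the main argument.
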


\begin{theorem}[{\cite[Theorem 2]{Th72}}]
\label{thm:interlacing2}
 Let $ \beta_1\geq \dots \geq \beta_{\min(p,q)}$ satisfy \eqref{eq:interlacing1} and \eqref{eq:interlacing2}. Then there exist unitary matrices $U\in\K^{n\times n}$ and $V\in\K^{m\times m}$ such that $P_p U A V^\ast P^*_q \in \K^{p\times q}$ has singular values $\seqsmall{\beta_i}_{i=1}^{\min\{p,q\}}$.
\end{theorem}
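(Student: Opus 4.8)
The plan is to prove sufficiency by a constructive induction that strips one row or one column off $A$ at a time. Concretely, $B = P_p\,U A V^\ast P_q^\ast$ is obtained from $A$ by a sequence of $n-p$ operations of the type ``multiply on the left by a unitary, then delete the last row'' (passing from format $(n,m)$ to $(n-1,m)$) and $m-q$ operations of the type ``multiply on the right by a unitary, then delete the last column'' (passing from $(n,m)$ to $(n,m-1)$); the unitaries arising from the row operations are accumulated into $U$ and those from the column operations into $V$. Thus it suffices to (a) solve each \emph{elementary} step — realize any prescribed one-step interlacing of the singular values by a suitable unitary — and (b) show that the target spectrum $\beta$ can be reached from $\alpha$ through a chain $\alpha = \gamma^{(0)}, \gamma^{(1)}, \dots, \gamma^{(N)} = \beta$, $N=(n-p)+(m-q)$, in which each consecutive pair obeys the relevant one-step interlacing (for a column step this is just $\gamma^{(t)}_i \ge \gamma^{(t+1)}_i \ge \gamma^{(t)}_{i+1}$, and analogously for a row step). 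Necessity of \eqref{eq:interlacing1}--\eqref{eq:interlacing2} is already supplied by Theorem~\ref{thm:interlacing1}, so only the construction is at issue.

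For the column step, observe that if $A'$ denotes the first $m-1$ columns of $AV^\ast$ for a unitary $V$, then $A'(A')^\ast = AA^\ast - uu^\ast$, where $u = AV^\ast e_m = Aw$ with $w = V^\ast e_m$ a unit vector; in particular $u \in \mathrm{col}(A)$. Conversely, given a one-step target $\gamma$, the classical inverse eigenvalue problem for a rank-one downdate produces (in the eigenbasis of $AA^\ast$) a vector $u$ with $AA^\ast - uu^\ast$ having eigenvalues $\gamma_i^2$; since those eigenvalues are nonnegative, $AA^\ast - uu^\ast \succeq 0$, which in the full-rank case forces $u^\ast(AA^\ast)^{-1}u \le 1$ by the Schur complement criterion. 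This is exactly the inequality needed to find a unit vector $w$ with $Aw = u$ (when $m > n$ one corrects the norm with a vector in $\ker A$; when $m = n$ the downdated matrix $A'(A')^\ast$ has rank at most $n-1$, hence is singular, so $u^\ast(AA^\ast)^{-1}u = 1$ and $w = A^{-1}u$ is already a unit vector). Extending $w$ to an orthonormal basis yields $V$. The row step is the column step applied to $A^\ast$, and since \eqref{eq:interlacing1}--\eqref{eq:interlacing2} are invariant under $A \mapsto A^\ast$ together with the exchange $(n,p) \leftrightarrow (m,q)$, nothing new is needed. (If $A$ is rank deficient one first factors through an isomorphism onto its image and repeats the bookkeeping; for a frame $\Phi$ this case does not occur.)

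The main obstacle is step (b): verifying that an arbitrary $\beta$ satisfying the two-sided $(p,q)$-interlacing can be joined to $\alpha$ by such a chain. I would carry out all row steps first and then all column steps, which reduces the problem to producing an intermediate sequence $\gamma$ of the appropriate length lying, coordinatewise, in the intervals forced by ``$\gamma$ interlaces $\alpha$ after $n-p$ one-step contractions'' and ``$\beta$ interlaces $\gamma$ after $m-q$ one-step contractions''; nonemptiness of each such interval and the possibility of choosing $\gamma$ nonincreasing both unwind, after a short computation, precisely to \eqref{eq:interlacing1} and \eqref{eq:interlacing2}. The genuinely delicate points are the refinement of each multi-step contraction into a legal sequence of single-step ones, and the bookkeeping when the number of singular values drops (narrow intermediate formats) or when some $\alpha_i$ coincide or vanish; this is where essentially all of the labor of the argument sits, the linear-algebraic content of the elementary step being just the standard rank-one inverse eigenvalue problem plus the positivity remark above.
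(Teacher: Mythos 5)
First, a point of reference: the paper does not prove this statement at all --- it is imported verbatim from Thompson \cite{Th72}, and the authors even remark later that Thompson's proof rests on existence results from \cite{Th66}. So there is no in-paper argument to compare yours against; I can only judge the proposal on its own terms.

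Your architecture --- peel off one row or one column at a time, solve each elementary step as a rank-one inverse eigenvalue problem, and chain --- is the right one, and your elementary column step is essentially complete: the identity $A'(A')^\ast = AA^\ast - (Aw)(Aw)^\ast$ with $w=V^\ast e_m$ a unit vector, the classical secular-equation construction of the downdate vector $u$, and the observation that $AA^\ast - uu^\ast \succeq 0$ forces $u^\ast(AA^\ast)^{-1}u\le 1$ (so a unit-norm preimage $w$ exists after correcting in $\ker A$, with equality exactly in the square case) are all correct. The gap is exactly where you place it: the reduction of the $N$-step interlacing \eqref{eq:interlacing1}--\eqref{eq:interlacing2} to a chain of one-step interlacings is asserted (``unwind, after a short computation'') but never carried out, and since the elementary step is textbook material, that reduction \emph{is} the substantive half of the sufficiency direction. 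It is fillable. For the column phase, given $\beta$ satisfying $k$-step interlacing against $\alpha$, set $\gamma_i:=\max(\alpha_{i+1},\beta_i)$ (out-of-range indices read as $0$); then $\gamma$ is nonincreasing, one-step interlaces $\alpha$ from below and above, and $\beta$ satisfies $(k-1)$-step interlacing against $\gamma$, so induction closes that phase; the row phase is the same statement for $A^\ast$, and splicing the two phases composes the upper bounds trivially and the lower bounds as $\beta_i\ge\gamma_{i+(m-q)}\ge\alpha_{i+(m-q)+(n-p)}$. What remains --- checking that this splicing reproduces exactly the index cutoff $i\le\min(p+q-m,p+q-n)$ in \eqref{eq:interlacing2}, and the bookkeeping when intermediate formats become square or tall so that the number of singular values drops --- is tedious but routine. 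As submitted, however, the proposal is a proof plan whose core combinatorial lemma is acknowledged rather than proved, so it does not yet constitute a proof.
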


These two results now allow for a complete characterization of the possible spectra of dual frames.
\begin{theorem}
\label{thm:interlacing-sing-values}
Let $n,m \in \N$, and set $r=m-n$. Let $\Phi$ be a frame for $\K^n$ with singular values
$\seqsmall{\sigma_i}_{i=1}^n$. 
Suppose $\Psi$ is any dual frame with singular values $\seqsmall{\sigma_i^\Psi}_{i=1}^n$ (also arranged in a non-increasing order). Then the following inequalities hold:
\begin{align}
  \frac{1}{\sigma_{n-i+1}} &\le  \sigma^\Psi_i &&\text{for } i=1,\dots,r,    \label{eq:interlacing-ineq-sing1}
\\ 
  \label{eq:interlacing-ineq-sing2}
  \frac{1}{\sigma_{n-i+1}} &\le  \sigma^\Psi_i \le \frac{1}{\sigma_{n-i+r+1}} &&\text{for } i=r+1,\dots,n.
\end{align}
Furthermore, for every sequence $\seqsmall{\sigma_i^\Psi}_{i=1}^n$  which satisfies \eqref{eq:interlacing-ineq-sing1} and \eqref{eq:interlacing-ineq-sing2}, there is a dual $\Psi$ of $\Phi$ with singular values $\seqsmall{\sigma_i^\Psi}_{i=1}^n$.
\end{theorem}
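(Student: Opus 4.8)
The plan is to recognize that the duality condition $\Psi\Phi^\ast = I_n$ together with an SVD $\Phi = U\Sigma_\Phi V^\ast$ lets us write $\Psi = U M_\Psi V^\ast$ where $M_\Psi$ has the structure of \eqref{eq:svd-dual-parametric}; in particular the singular values of $\Psi$ equal those of $M_\Psi$, and $M_\Psi$ has the form $[\,\Sigma_\Phi^{-1}\ \mid\ S\,]$ where $\Sigma_\Phi^{-1}=\diag(1/\sigma_1,\dots,1/\sigma_n)$ and $S\in\K^{n\times r}$ is arbitrary. So the whole problem becomes: characterize the possible singular values of an $n\times m$ matrix whose first $n$ columns are the fixed diagonal matrix $D:=\Sigma_\Phi^{-1}$ (with decreasing diagonal $1/\sigma_1\le\cdots\le 1/\sigma_n$, i.e.\ \emph{increasing} as written) and whose remaining $r$ columns are free. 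The key realization is that this is exactly the Thompson setup: $M_\Psi$ is an $n\times m$ matrix, and $D = M_\Psi P_n^\ast$ is the restriction of $M_\Psi$ to its first $n$ columns, i.e.\ $D = P_n' M_\Psi P_n^\ast$ with $p=n$ (no row restriction, $P_n'$ the identity). Conversely, by Theorem~\ref{thm:interlacing2}, any prescribed singular values satisfying the interlacing inequalities are realized by $VM_\Psi W^\ast$ for suitable unitaries — but we need the \emph{submatrix} to be exactly $D$, not just to have the right singular values of $D$ (which are already fixed).

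For the necessity direction (the inequalities \eqref{eq:interlacing-ineq-sing1}--\eqref{eq:interlacing-ineq-sing2}): apply Theorem~\ref{thm:interlacing1} with $A = M_\Psi\in\K^{n\times m}$ having singular values $\alpha_i = \sigma_i^\Psi$, and $B = D = \Sigma_\Phi^{-1}$, which arises as $B = P_n A P_n^\ast$ with the left projection being the identity ($p=n$) and the right projection keeping the first $q=n$ columns. The singular values of $B=D$ are the numbers $1/\sigma_1,\dots,1/\sigma_n$ sorted decreasingly, i.e.\ $\beta_i = 1/\sigma_{n-i+1}$. Then \eqref{eq:interlacing1} gives $\sigma_i^\Psi = \alpha_i \ge \beta_i = 1/\sigma_{n-i+1}$ for all $i=1,\dots,n$, which is precisely the lower bounds in both \eqref{eq:interlacing-ineq-sing1} and \eqref{eq:interlacing-ineq-sing2}. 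For the upper bounds in \eqref{eq:interlacing-ineq-sing2}, use \eqref{eq:interlacing2} with $p=q=n$, $m=n+r$: it reads $\beta_i \ge \alpha_{i+(m-n)+(n-n)} = \alpha_{i+r}$ for $i \le \min\{p+q-m,p+q-n\} = \min\{n-r,n\} = n-r$. Substituting $\beta_i = 1/\sigma_{n-i+1}$ and re-indexing $i' = i+r$ (so $i = i'-r$ ranges over $r+1,\dots,n$), this becomes $\sigma^\Psi_{i'} = \alpha_{i'} \le \beta_{i'-r} = 1/\sigma_{n-(i'-r)+1} = 1/\sigma_{n-i'+r+1}$, exactly \eqref{eq:interlacing-ineq-sing2}.

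For the sufficiency direction, I would proceed constructively rather than by a direct appeal to Theorem~\ref{thm:interlacing2}, since that theorem only guarantees the submatrix has the right singular values, not that it equals $D$ on the nose — and here $D$ is already a fixed diagonal matrix, so we must be careful. Given a target sequence $\seqsmall{\sigma_i^\Psi}$ satisfying \eqref{eq:interlacing-ineq-sing1}--\eqref{eq:interlacing-ineq-sing2}, the goal is to produce $S\in\K^{n\times r}$ such that $[\,D\mid S\,]$ has these singular values; equivalently, by considering $[\,D\mid S\,][\,D\mid S\,]^\ast = D^2 + SS^\ast$, we need a positive semidefinite matrix $SS^\ast$ of rank $\le r$ such that $D^2 + SS^\ast$ has eigenvalues $(\sigma_i^\Psi)^2$. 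Writing $\delta_i = 1/\sigma_i^2$ for the (given, fixed) eigenvalues of $D^2$ and $\mu_i = (\sigma_i^\Psi)^2$ for the targets, this is a classical low-rank additive eigenvalue (Weyl-type / Fan--Pall) problem: there exists a positive semidefinite rank-$\le r$ perturbation $G$ with $D^2 + G$ having spectrum $\{\mu_i\}$ if and only if $\mu_i \ge \delta_{[i]}$ for all $i$ with appropriate sorting and $\mu_i \le \delta_{[i-r]}$ for $i > r$ — which unwinds to exactly our inequalities after matching the sortings ($D^2$ has eigenvalues $\delta_i = 1/\sigma_i^2$, so sorted increasingly these are $1/\sigma_1^2 \le \cdots \le 1/\sigma_n^2$, and the $i$-th largest is $1/\sigma_{n-i+1}^2$). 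I would either cite the Fan--Pall interlacing theorem for this, or give the explicit construction: diagonalize the problem and build $S$ column by column using a sequence of orthogonalization / rank-one update steps, mirroring the efficient construction mentioned after Theorem~\ref{thm:tight-dual} and in Theorem~\ref{thm:possible-sing-values-of-dual}. Then set $\Psi = U\,[\,D\mid S\,]\,V^\ast$; it is a dual by \eqref{eq:svd-dual-cond} and has the prescribed singular values since unitary conjugation preserves them.

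The main obstacle is the sufficiency half, and specifically bridging between the ``spectral'' statement (eigenvalues of $D^2 + SS^\ast$) and the constructive realization of $S$: one must verify that the interlacing inequalities \eqref{eq:interlacing-ineq-sing1}--\eqref{eq:interlacing-ineq-sing2}, after squaring and re-indexing, are \emph{exactly} the Fan--Pall conditions for a rank-$r$ positive semidefinite update, with no slack or sign issues lost in the index bookkeeping. A clean way to sidestep a direct combinatorial argument is to invoke Theorem~\ref{thm:interlacing2}: it gives unitaries $\tilde U, \tilde V$ with $P_n \tilde U D_{\mathrm{ext}} \tilde V^\ast P_n^\ast$ having the target singular values, where $D_{\mathrm{ext}} = [\,D\mid 0_{n\times r}\,]$ — but then the first $n$ columns need not equal $D$. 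However, $[\,D\mid 0\,]$ and the canonical dual's $M$ share the first block, and one can post-compose with a unitary $V' \in \K^{m\times m}$ acting only on the last $r$ coordinates together with a Gram--Schmidt correction to restore the first block to $D$ exactly while preserving singular values; I would spell this reduction out carefully, as it is the delicate point. Once $M_\Psi$ with first block $D$ and the right spectrum is in hand, the conclusion $\Psi = U M_\Psi V^\ast$ is immediate, completing the characterization.
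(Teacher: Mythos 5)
Your necessity argument is exactly the paper's: factor $\Psi=U M_\Psi V^\ast$, note that $M_\Psi P_n^\ast$ is the fixed diagonal block with singular values $1/\sigma_{n-i+1}$, and apply Theorem~\ref{thm:interlacing1} with $p=q=n$. Your index bookkeeping for \eqref{eq:interlacing-ineq-sing1} and \eqref{eq:interlacing-ineq-sing2} checks out.

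The sufficiency half is where you diverge, and it is not complete as written. You correctly identify the obstacle --- Theorem~\ref{thm:interlacing2} prescribes only the singular values of the submatrix, not the submatrix itself --- but both of your proposed resolutions defer the hard step. The Fan--Pall route needs the converse of the Weyl inequalities for rank-$r$ positive semidefinite perturbations (existence of a PSD $G$ with $\rank G\le r$ such that $D^2+G$ has a prescribed spectrum whenever the interlacing holds); this is true, but it is a theorem of comparable depth to Theorem~\ref{thm:interlacing2} which you neither prove nor precisely cite, and the passage from ``a principal submatrix with the same eigenvalues as $D^2$'' back to ``a principal submatrix equal to $D^2$'' (conjugation by a block unitary, then extracting $S$ from a factorization $H=G^\ast G$) is itself a step you would have to spell out. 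Your second sketch (``post-compose with a unitary \dots together with a Gram--Schmidt correction'') is morally the right move but aimed at the wrong matrix. The paper's resolution is cleaner: for sufficiency it abandons the $[\,D\mid S\,]$ parametrization entirely and applies Theorem~\ref{thm:interlacing2} to $A=\Phi$ itself with targets $\beta_i=1/\sigma^\Psi_{n-i+1}$, obtaining unitaries $U_1,V_1$ such that $\Phi V_1^\ast P_n^\ast$ has singular values $(\sigma_i^\Psi)^{-1}$; a second SVD produces $U,V_2$ with $U^\ast\Phi V_1^\ast P_n^\ast V_2^\ast=\diagonal\bigl((\sigma_i^\Psi)^{-1}\bigr)$, and the explicit unitary $V=V_1^\ast(I_m-P_n^\ast P_n+P_n^\ast V_2^\ast P_n)$ then yields $\Phi V\Sigma_\Psi^\ast U^\ast=I_n$, so $\Psi=U\Sigma_\Psi V^\ast$ is a dual with exactly the prescribed singular values. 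No submatrix ever has to equal $D$ on the nose, because the dual is assembled from its own SVD rather than from the block form. Either supply the converse perturbation theorem to complete your route, or redirect your application of Theorem~\ref{thm:interlacing2} from $M_\Psi$ to $\Phi$, after which the correction you are gesturing at becomes a two-line computation.
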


\begin{proof}
To show the necessity of the conditions, we will apply Theorem~\ref{thm:interlacing1} for $p=q=n$. Note that this entails that $P_p=I_n$.
Let $\Phi=U\Sigma_\Phi V^\ast$ be a full SVD of $\Phi$. We factor dual frames of $\Phi$ as $\Psi=U M_\Psi V^\ast$, where
  $M_\Psi$ is given as in \eqref{eq:svd-dual-parametric} with $s_{i,k}\in \K$ for $i=1,\dots,n$ and
  $k=1,\dots,r=m-n$. We let $M_{\widetilde{\Psi}}$ denote the case where we choose $s_{i,k}=0$ for all $i,k$; in this case $\widetilde{\Psi}=U M_{\widetilde{\Psi}} V^\ast$ is the canonical dual. Note that $M_{\Psi} P^*_n \in \K^{n\times n}$ has the same spectrum as $M_{\Psi} P_n^*P_n= M_{\widetilde{\Psi}}$, namely singular values $\seqsmall{1/\sigma_{n-i+1}}_{i=1}^n$. Furthermore, $\Psi$ and $M_\Psi$ have the same singular values  $\seqsmall{\sigma_i^\Psi}_{i=1}^n$. A direct application of Theorem~\ref{thm:interlacing1} for $A=M_\Psi$ and $B=M_{\Psi}P^\ast_n\in \K^{n\times n}$ now shows \eqref{eq:interlacing-ineq-sing1} and \eqref{eq:interlacing-ineq-sing2}.

For the existence, we need to show that there are unitary matrices $U\in \K^{n\times n}$, $V\in K^{m\times m}$ such that for the diagonal matrix $\Sigma_\Psi\in \R^{n\times m}$ with diagonal entries $\seqsmall{\sigma_i^\Psi}_{i=1}^n$ one has 
\begin{equation}
 \Phi V \Sigma_\Psi^* U^* =I_n. \label{eq:dualsvd}
\end{equation}
For this we will apply Theorem~\ref{thm:interlacing2} for $A= \Phi$, $\seqsmall{\alpha}_{i=1}^n = \seqsmall{\sigma_i}_{i=1}^n$, and again $p=q=n$. 
Let a sequence $\seqsmall{\sigma_i^\Psi}_{i=1}^n$ which satisfies \eqref{eq:interlacing-ineq-sing1} and \eqref{eq:interlacing-ineq-sing2} be given. Note that for our choice of parameters, this is equivalent to having $\seqsmall{\beta_i}_{i=1}^n = \seqsmall{1/\sigma_n^\Psi,\dots,1/\sigma_1^\Psi}$ satisfy \eqref{eq:interlacing1} and \eqref{eq:interlacing2}. Hence, by Theorem~\ref{thm:interlacing2}, there exist unitary matrices $U_1 \in \K^{n\times n}$ and $V_1\in \K^{m\times m}$ such that $U_1 \Phi V_1^* P_n^*$ and hence $ \Phi V_1^* P_n^*$ have singular values $\seq{(\sigma_i^\Psi)^{-1}}_{i=1}^n$ (in a non-decreasing order). By yet another SVD, there exist unitary matrices $U,V_2\in \K^{n\times n}$  such that $U^*\Phi V_1^* P^*_n  V_2^*$ is an $n\times n$ diagonal matrix with entries $\seq{(\sigma_i^\Psi)^{-1}}_{i=1}^n$ (note that this time, the variant of the SVD with singular values in a non-decreasing order is used). Thus, we have
\begin{equation}
 U^* \Phi V_1^* P^*_n  V_2^* P_n \Sigma_\Psi^*  = I_n
\end{equation}
and, noting that $(I_m-P_n^*P_n) \Sigma^*_\Psi =0$, this leads to
\begin{equation}
 \Phi V_1^*(I_m-P_n^*P_n+  P^*_n  V_2^* P_n) \Sigma_\Psi^* U^* = I_n.
\end{equation}
A direct calculation using the fact that $P_n P_n^*=I_n$ shows that 
\begin{equation}
V=V_1^*(I_m-P_n^*P_n+  P^*_n  V_2^* P_n) \in \K^{m\times m}
\end{equation}
is unitary, so we obtain \eqref{eq:dualsvd} as desired.
\end{proof}

The inequalities \eqref{eq:interlacing-ineq-sing1} and \eqref{eq:interlacing-ineq-sing2}, written in terms of the singular values $(\sigma^{\widetilde{\Psi}}_i)_{i=1}^n$ of the canonical dual frame $\widetilde{\Psi}:=S^{-1}\Phi$, have the following simple form:  
 \begin{align*}
  \sigma^{\widetilde{\Psi}}_i &\le  \sigma^\Psi_i &&\text{for } i=1,\dots,r,  \\ 
  \sigma^{\widetilde{\Psi}}_i &\le  \sigma^\Psi_i \le \sigma^{\widetilde{\Psi}}_{i-r} &&\text{for } i=r+1,\dots,n.
\end{align*}
We remark that the first part of Theorem~\ref{thm:interlacing-sing-values} also follows from $r$ applications of 
\cite[Theorem 7.3.9]{MR0832183} on the matrix $M_\Psi$ defined in
  (\ref{eq:svd-dual-parametric}) or from the well-known interlacing inequalities for Hermitian matrices by Weyl. 

Also note that the proof of Theorem~\ref{thm:interlacing2} and hence the proof of Theorem~\ref{thm:interlacing-sing-values} involves existence results for unitary matrices from \cite{Th66} and is, consequently, less intuitive and constructive than our above proof for the special case in Theorem~\ref{thm:possible-sing-values-of-dual}.  For this reason, we decided to include Theorem~\ref{thm:possible-sing-values-of-dual}, even though it directly follows from the general existence result of Theorem~\ref{thm:interlacing-sing-values}.

In terms of eigenvalues of frame operators, Theorem~\ref{thm:interlacing-sing-values} states that the spectra in the set of all duals exhaust the set $\Lambda \subset \R^n$ defined by 
\[ \Lambda=\setprop{(\lambda_1,\dots,\lambda_n)\in \R^n}{\lambda^{\widetilde{\Psi}}_i \le \lambda_i \le \lambda^{\widetilde{\Psi}}_{i-r} \text{ for all
    $i=1,\dots,n$}},\] where $\lambda^{\widetilde{\Psi}}_i = 1/\lambda^\Phi_{n-i+1}$ is the $i$th eigenvalue of the canonical dual frame
operator; we again use the convention that $\lambda^{\widetilde{\Psi}}_i=\infty$ for $i\le 0$. 
By considering the trace of $M_\Psi M_\Psi^\ast$, 
we see that the canonical dual frame is the unique dual frame that minimizes the inequalities in $\Lambda$. Any other spectrum in $\Lambda$ will not be associated with a unique dual frame, in particular, if $s_i=(s_{i,1},\dots,s_{i,r}) \neq 0$  in $M_{\Psi}$ for some $i=1,\dots,n$, then replacing $s_i$ by $zs_i$ for any $\abs{z}=1$ will yield a dual frame with unchanged spectrum. 
The frame operator $S_{\Psi}$  and the canonical dual frame operator $S_{\widetilde{\Psi}}$ are connected by
$S_\Psi = S_{\widetilde{\Psi}} + C$,
where $C=W^\ast W$, and $W\in \K^{r \times n}$ is arbitrary. Hence, in linear algebra terms, Theorem~\ref{thm:interlacing-sing-values} tells us that 
for a given positive definite, Hermitian matrix $S_{\widetilde{\Psi}}$ with spectrum $\seqsmall{\lambda^{\widetilde{\Psi}}_i}_{i=1}^n$ and for a given sequence $\seqsmall{\lambda_i}_{i=1}^n \in \Lambda$, there exists a positive semi-definite, Hermitian matrix $C$ of rank at most $\min\set{m-n,n}$ such that the spectrum of $S_\Psi := S_{\widetilde{\Psi}} + C$ consists of $\seqsmall{\lambda_i}_{i=1}^n$.

For a better understanding of the more general framework where Theorem~\ref{thm:possible-sing-values-of-dual} does not yield a complete characterization of the possible spectral patterns, we will continue by an extensive discussion of the example of a frame of three vectors in $\R^2$.


\begin{example}
\label{example:2-by-3-spectral}
   Suppose $\Phi$ is a frame in $\R^2$ with $3$
  frame vectors and frame bounds $0 < A^\Phi \le B^\Phi$, and let
  $\Phi=U \Sigma_\Phi V^\ast$ be the SVD of $\Phi$.  Then all dual
  frames are given as $\Psi=U M_\Psi V^\ast$, where
  \[ M_\Psi =
  \begin{bmatrix}
    1/\sigma_1 & 0 & s_{1} \\
    0 & 1/\sigma_2 & s_{2}
  \end{bmatrix} \] for $s_1,s_2 \in \R$. Since the frame operator of
  the dual frame is given by $S_{\Psi}=\Psi \Psi^\ast = U M_\Psi
  M_{\Psi}^\ast U^\ast$, we can find the eigenvalues of $S_\Psi$ by
  considering eigenvalues of
  \[ S:=M_\Psi M_{\Psi}^\ast = \begin{bmatrix}
    1/\sigma_1^2+s_1^2 & s_{1}s_2 \\
    s_1s_2 & 1/\sigma^2_2+s_2^2
  \end{bmatrix}. \] These are given by
  \[ \lambda_{1,2} = \frac{1}{2} \tr{S} \pm \frac{1}{2}R,\quad
  \text{where} \quad R = \sqrt{(\tr{S})^2-4\det{S}}.\] One easily sees
  that $\tr{S}$ monotonically grows as a function of
  $s_1^2+s_2^2$, whereas for fixed $\tr{S}$, the term $R$ grows as a function
  of $s_1^2-s_2^2$.  This exactly yields the two degress of freedom
  predicted by the existence part of
  Theorem~\ref{thm:interlacing-sing-values}.  A straightforward
  calculation shows that $R+(s_1^2+s_2^2) \ge
  \tfrac{1}{\sigma_2^2}-\tfrac{1}{\sigma_1^2}\ge 0$, hence we see that
  \begin{equation}
    \lambda_1 \ge \tfrac{1}{\sigma_2^2} \quad \text{and} \quad \tfrac{1}{\sigma_2^2} \ge\lambda_2 \ge \tfrac{1}{\sigma_1^2},\label{eq:ineq-example}
  \end{equation}
  which is also the conclusion of the necessity part of
  Theorem~\ref{thm:interlacing-sing-values}. We remark that the two
  eigenvalues depend only on quadratic terms of the form $s_1^2$ and
  $s_2^2$. Therefore, if $s_1$ and $s_2$ are non-zero, then the
  choices $(\pm s_1,\pm s_2)$ yield four different dual frames having
  the same eigenvalues. In this case the level sets of $\lambda_1$ as
  a function of $(s_1,s_2)$ are origin-centered ellipses with major
  and minor axes in the $s_1$ and $s_2$ direction,
  respectively. Moreover, the semi-major axis is always greater than
  $s_0:=(\sigma_2^{-2}-\sigma_1^{-2})^{1/2}$. The level sets of
  $\lambda_2$ are origin-centered, East-West opening hyperbolas with
  semi-major axes greater than $s_0$. In
  Figure~\ref{fig:dual-frame-bounds} the possible eigenvalues of the
  dual frame operator of the frame $\Phi$ defined by
  \begin{equation} \label{eq:new-2x3-example} \Phi =
    \frac{1}{50}\begin{bmatrix}
      90 & -12 & -16 \\
      120 & 9 & 6
    \end{bmatrix}
  \end{equation}
  are shown as a function of the two
  parameters $s_1$ and $s_2$; Figure~\ref{fig:dual-frame-bounds-b}
  shows the level sets and the four intersection points $(\pm s_1,\pm
  s_2)$ for each allowed spectrum in the interior of $\Lambda$.
   Note that the singular values are $\sigma_1=3$ and $\sigma_2=1/2$, hence
  $B^\Phi=9$ and $A^\Phi=1/4$.

\begin{figure}[ht!]
  \centering \subfloat[Graphs of $\lambda_{1}$ and $\lambda_{2}$]{%
    \includegraphics[height=0.45\textwidth]{./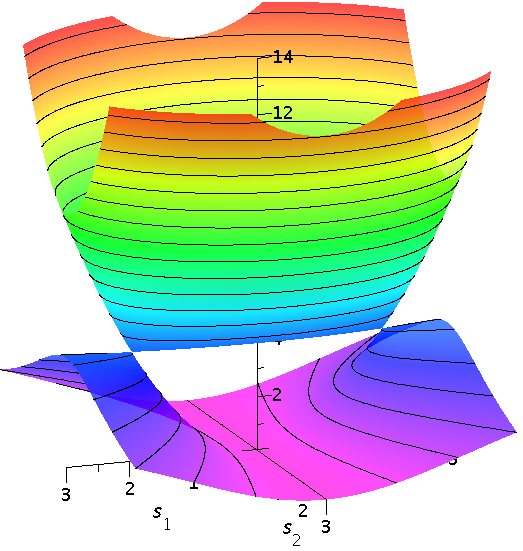}
    \label{fig:dual-frame-bounds-a}} \subfloat[Level curves of
  $\lambda_{1}$ and $\lambda_{2}$]{%
    \includegraphics[height=0.45\textwidth]{./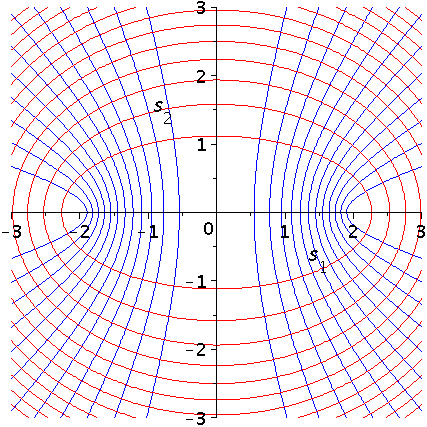}
    \label{fig:dual-frame-bounds-b}}
  \caption{The lower and upper frame bounds of dual frames $\Psi$ (to
    $\Phi$ defined in \eqref{eq:new-2x3-example}) as a function of
    $s_1$ and $s_2$. The two graphs in (a) meet at $s_1=\pm
    \sqrt{35}/3$ and $s_2=0$ which correspond to tight dual frames.}
  \label{fig:dual-frame-bounds}
\end{figure}

When the difference between the singular values of $\Psi$ goes to zero, the ellipses degenerate to a line segment (or even to a point if $\sigma_1=\sigma_2$). 
The limiting case corresponds to tight dual frames so Theorem~\ref{thm:tight-dual}(ii) applies, and we are forced to set $s_2=0$ to achieve row orthogonality of $M_\Psi$.  We then need to pick
$s_1$ such that the two row norms of $M_\Psi$ are equal, thus
\[
\abs{s_1}=\sqrt{\frac{1}{\sigma_2^2}-\frac{1}{\sigma_1^2}}
=\sqrt{\frac1{A^\Phi} - \frac{1}{B^\Phi}}=s_0,
\]
which shows that the above lower bound for the semi-major axis is sharp. 
\end{example}

\section*{Acknowledgments}
The authors would like to thank Christian Henriksen for valuable discussions. 
G.~Kutyniok acknowledges support by the Einstein Foundation Berlin, by Deutsche Forschungsgemeinschaft
(DFG) Grant SPP-1324 KU 1446/13 and DFG Grant KU 1446/14, and by the DFG Research Center {\sc Matheon} ``Mathematics for key technologies'' in Berlin.

\end{document}